\newcommand{\version}{Version 3.2,\ \  August 05, 2013}
\newcommand{\R}{\mathbb{R}} \newcommand{\C}{\mathbb{C}}
\newcommand{\Z}{\mathbb{Z}} 
\newcommand{\p}[1]{{\mathbb{P}^{#1}}} 
\newcommand{\op}[1]{{\cal O}_{\C\mathbb{P}^{#1}}}
\newcommand{\im}{{\rm Im}~}
\newcommand{\calm}{{\cal M}} 
\newcommand{\calo}{{\cal O}} \newcommand{\calf}{\mathcal{F}}
\newcommand{\bOmega}{{\boldsymbol{\boldsymbol{\Omega}}}}
\newcommand{\bMu}{{\boldsymbol{\boldsymbol \mu}}}
\newcommand{\4}{{/\!\!/\!\!/\!\!/}} 
\newcommand{\3}{{/\!\!/\!\!/}} 
\newcommand{\2}{{/\!\!/}}
\newcommand{\1}{{\sqrt{-1}}}
\newcommand{\cntrct}                
{\hspace{2pt}\raisebox{1pt}{\text{$\lrcorner$}}\hspace{2pt}}
\renewcommand{\leq}{\leqslant}
\newtheorem{theorem}{Theorem}
\newtheorem{proposition}[theorem]{Proposition}
\newtheorem{lemma}[theorem]{Lemma}
\newtheorem{corollary}[theorem]{Corollary}
\newtheorem{remark}[theorem]{Remark}
\newtheorem{example}[theorem]{Example}
\newtheorem{definition}[theorem]{Definition}
\newtheorem{claim}[theorem]{Claim}
\def\Definition{\begin{definition}}
\def\ed{\end{definition}}
\def\Remark{\begin{remark}}
\def\er{\end{remark}}
\def\Claim{\begin{claim}}
\def\ec{\end{claim}}
\def\Corollary{\begin{corollary}}
\def\eco{\end{corollary}}
\def\Lemma{\begin{lemma}}
\def\el{\end{lemma}}
\def\Theorem{\begin{theorem}}
\def\et{\end{theorem}}
\def\Proposition{\begin{proposition}}
\def\ep{\end{proposition}}
\def\goth{\mathfrak}
\newcommand{\g}{{\goth g}} \newcommand{\gu}{{\goth u}} \newcommand{\gl}{{\goth g}{\goth l}}
\def\6{\partial}
\def\endproof{\hbox{\vrule width 4pt height 4pt depth 0pt}}
\newcommand{\restrict}[1]{{\left|_{{\phantom{|}\!\!}_{#1}}\right.}}
\newcommand{\arrow}{{\:\longrightarrow\:}}
\newcommand{\Tw}{\operatorname{Tw}}
\newcommand{\Ann}{\operatorname{Ann}}
\newcommand{\Alt}{\operatorname{Alt}}
\newcommand{\Av}{\operatorname{Av}}
\newcommand{\Mat}{\operatorname{Mat}}
\newcommand{\Id}{{\mathbf 1}}
\newcommand{\IdId}{\operatorname{Id}}
\newcommand{\Tr}{\operatorname{Tr}}
\renewcommand{\Re}{\operatorname{Re}}
\newcommand{\ev}{\operatorname{\sf ev}}
\newcommand{\St}{\operatorname{\sf St}}
\newcommand{\Cl}{\operatorname{{\cal C}l}}
\newcommand{\Sp}{\operatorname{Sp}}
\newcommand{\rk}{\operatorname{\sf rk}}
\newcommand{\Sec}{{\operatorname{Sec}}}
\newcommand{\Char}{{\operatorname{\sf char}}}
\newcommand{\Area}{{\operatorname{Area}}}
\newcommand{\Sym}{\operatorname{Sym}}
\newcommand{\End}{\operatorname{End}}
\newcommand{\Vol}{\operatorname{Vol}}
\newcommand{\Lie}{\operatorname{Lie}}
\newcommand{\Null}{\operatorname{Null}}
\def\x@arrow{\DOTSB\Relbar}
\def\xlongrightarrowfill@{\arrowfill@\relbar\relbar\longrightarrow}
\newcommand{\xlongrightarrow}[2][]{%
	\ext@arrow 0099\xlongrightarrowfill@{#1}{#2}}
\begin{document}

\title{Trihyperk\"ahler reduction \\ and instanton bundles on $\C\p3$}
\author{Marcos Jardim \\ IMECC - UNICAMP \\
Departamento de Matem\'atica \\ Rua S\'ergio Buarque de Holanda, 651 \\
13083-859 Campinas, SP, Brazil  \\[4mm] Misha Verbitsky \\  
Laboratory of Algebraic Geometry, \\
Faculty of Mathematics, NRU HSE,\\
7 Vavilova Str. Moscow, Russia, {\em and}\\
Institute for the Physics and Mathematics of the Universe,\\
University of Tokyo,
    5-1-5 Kashiwanoha, Kashiwa, 277-8583, Japan
} 

\maketitle

\begin{abstract}
A trisymplectic structure on a complex $2n$-manifold is a 
3-dimensional space $\Omega$ of closed holomorphic 
forms such that any element of $\Omega$ has constant rank $2n$, $n$ or 0,
and degenerate forms in $\Omega$ belong to a non-degenerate quadric
hypersurface. We show that a trisymplectic manifold is equipped
with a holomorphic 3-web and the Chern
connection of this 3-web is holomorphic, 
torsion-free, and preserves the three 
symplectic forms. We construct a trisymplectic
structure on the moduli of regular rational 
curves in the twistor space of a hyperk\"ahler
manifold, and define a trisymplectic reduction
of a trisymplectic manifold, which is a complexified
form of a hyperk\"ahler reduction. We prove that
the trisymplectic reduction in the space of regular
rational curves on the twistor space  
of a hyperk\"ahler manifold $M$ is compatible with the
hyperk\"ahler reduction on $M$. 
  
As an application of these geometric ideas, we consider
the ADHM construction of instantons and show that the
moduli space of rank $r$, charge $c$ framed instanton
bundles on $\C\mathbb{P}^{3}$ is a smooth 
trisymplectic manifold of complex dimension $4rc$. In
particular, it follows that the moduli space of rank $2$,
charge $c$ instanton bundles on $\C\mathbb{P}^{3}$ is a
smooth complex manifold dimension $8c-3$, thus
settling part of a 30-year old conjecture.

\end{abstract}

\tableofcontents

\hfill

\noindent{\bf Acknowledgments.} 
The first named author is partially supported by the CNPq
grant number 302477/2010-1 and the FAPESP grant number
2011/01071-3. He thanks Amar Henni and Renato 
Vidal Martins for several discussions related to instanton bundles. 
The second named author was partially supported by the 
FAPESP grant number 2009/12576-9, AG Laboratory SU-HSE, RF government 
grant, ag. 11.G34.31.0023, RFBR grant
 12-01-00944, NRU-HSE  Academic Fund Program in 2013-2014, research grant
12-01-0179, and Simons-IUM fellowship. We are grateful to
Alexander Tikhomirov for his insight and comments, and
to Andrei Soldatenkov for his suggestion to use
the Clifford algebra in our treatment of trisymplectic structures.



\section{Introduction}


\subsection{An overview}

In our previous paper \cite{_JV:Instantons_}, we
introduced the notion of holomorphic $SL(2)$-webs, and
argued that manifolds equipped with a holomorphic $SL(2)$-web structure
may be regarded as the complexification of hypercomplex manifolds. We showed
that manifolds $M$ carrying such structures have a canonical
holomorphic connection, called the \emph{Chern connection}, which
is torsion-free and has holonomy in $GL(n,\C)$, where
$\dim_\C M=2n$.

The main example of holomorphic $SL(2)$-webs are given by
twistor theory: given a hyperk\"ahler manifold $M$, then
the space of regular holomorphic sections of the twistor
fibration $\pi:\Tw(M)\to\C\p1$ is equipped with a holomorphic
$SL(2)$-web. We then exploited this fact and the
Atiyah--Drinfeld--Hitchin--Manin (ADHM) construction of
instantons to show that the moduli space of framed
instanton bundles on $\C\p3$ is a holomorphic $SL(2)$-web.

The present paper is a sequel to
\cite{_JV:Instantons_}. Here, we expand in both aspects
of our previous paper. On one hand, we describe a new
geometric structure on complex manifolds, called {\em a 
trisymplectic structure}. The trisymplectic structure
is an important special case of a holomorphic
$SL(2)$-web. For a trisymplectic structure, we define
reduction procedure, allowing us to define 
a trisymplectic quotient. Applying these new ideas to the ADHM
construction of instantons allows us to give a better
description of the moduli space of framed instanton
bundles on $\C\p3$, and to prove its smoothness and
connectness. This allows us to solve a part of a 30-year old
conjecture regarding the moduli space of rank $2$
instanton bundles on $\C\p3$.

To be more precise, we begin by introducing the notion of
\emph{trisymplectic structures on complex manifolds} (see
Definition \ref{_trisymple_mani_Definition_} below), and
show that trisymplectic manifolds carry an induced
holomorphic $SL(2)$-web. Our first main goal is to
introduce the notion of a \emph{trisymplectic quotient} of
a trisymplectic manifold, which would enable us to
construct new examples of trisymplectic manifolds out of
known ones, e.g. flat ones. 

Next, we introduce the notion of \emph{trihyperk\"ahler
quotient} $\Sec_0(M)\4 G$ for a hyperk\"ahler manifold
$M$, equipped with an action of a Lie group $G$ by considering
the trisymplectic quotient of the space $\Sec_0(M)$ of regular
holomorphic sections of the twistor fibration of $M$.

Our first main result (Theorem
\ref{_trihk_red_equal_on_sec_Theorem_}) 
is compatibility between this procedure
and the hyperk\"ahler quotient, which we denote
by $M \3 G$. We show that,
under some reasonable conditions,  
the trihyperk\"ahler reduction $\Sec_0(M)\4 G$ 
admits an open embedding to the space $\Sec_0(M\3 G)$ of 
regular sections of the twistor fibration of 
the hyperk\"ahler quotient $M \3 G$.
This shows, in particular, that (similarly to the smoothness of
the hyperk\"ahler reduction) the trihyperk\"ahler
reduction of $M$ is a smooth trisymplectic manifold.

Our second main result provides an affirmative answer
to a long standing conjecture regarding the smoothness
and dimension of the moduli space of rank
$2$ instanton bundles on $\C\p3$, a.k.a mathematical instanton bundles
(see Section \ref{_appl_} for precise definitions).
More precisely, the moduli space of mathematical instanton bundles with
second Chern class (or \emph{charge}) $c$ is conjectured 
to be an irreducible, nonsingular
quasi-projective variety  of dimension $8c-3$ (c.f. \cite[Conjecture  1.2]{CTT}).
The truth of the conjecture for $c\le 5$ was established by various authors in the
past four decades: Barth settled the $c=1$ case in 1977
\cite{B1}; Hartshorne established the case $c=2$ in 1978
\cite{H}; Ellingsrud and Stromme settled the $c=3$ case in
1981 \cite{ES}; the irreducibility of the $c=4$ case was
proved by Barth in 1981 \cite{B2}, while the smoothness is
due to Le Potier \cite{LeP} (1983);  and
Coanda--Tikhomirov--Trautmann (2003). More recently,
Tikhomirov has shown in \cite{T} that irreducibility holds
for odd values of $c$. 

In the present paper, we apply the geometric techniques established
above to the ADHM construction of instantons, and show that the moduli 
space of rank $r$, charge $c$ \emph{framed} instanton
bundles on $\C\mathbb{P}^{3}$ is a smooth,
trisymplectic manifold of complex dimension $4rc$ (see
Theorem \ref{_instanton bdls_} below). It then follows easily (see Section
\ref{instantons_bundles} for the details) that the moduli space of mathematical instanton bundles of charge $c$ is a smooth complex manifold of dimension $8c-3$, thus settling
the smoothness part of the conjecture for all values of $c$.


\subsection{3-webs, $SL(2)$-webs and trisymplectic structures}

Let $M$ be a real analytic manifold equipped with 
an atlas $\{U_i\hookrightarrow \R^n\}$ and 
real analytic transition functions
$\psi_{ij}$. A \emph{complexification} of $M$
(see \cite{_Grauert:real-ana_}) 
is a germ of a complex manifold, covered by
open sets $\{V_i\hookrightarrow \C^n\}$ indexed by the same set
as $\{U_i\}$, and with the transition function $\psi_{ij}^\C$
obtained by analytic extension of $\psi_{ij}$
into the complex domain. 

Complexification can be applied to a complex
manifold, by considering it as a real analytic
manifold first. As shown by Kaledin and Feix
(see \cite{_Feix_}, \cite{_Kaledin:book_} and the argument in
\cite[Section 1]{_JV:Instantons_}),
a complexification of a real analytic K\"ahler manifold
naturally gives a germ of a hyperk\"ahler manifold.
In the paper \cite{_JV:Instantons_}
we took the next step by looking at a 
complexification of a hyperk\"ahler manifold.
We have shown that such a complexification
is equipped with an interesting geometric
structure which we called {\em a holomorphic
$SL(2)$-web}. 

A holomorphic $SL(2)$-web on a complex manifold $M$
is a collection of involutive holomorphic sub-bundles
$S_t\subset TM$, $\rk S_t = \frac 1 2 \dim M$,
parametrized by $t\in \C\p1$, and satisfying
the following two conditions. First, $S_t \cap S_{t'}=0$
for $t \neq t'$, and second, the projector operators
$\Pi_{t, t'}$ of $TM$ onto $S_{t'}$ along $S_t$
generate an algebra isomorphic to the the algebra $\Mat(2)$ of
$2\times2$ complex matrices
 (cf. Definition \ref{_SL(2)_web_Definition_} and
Section \ref{_Mat_2_Subsection_} below).

This structure is a special case of a notion
of 3-web developed in 1930-ies by Blaschke and
Chern. Let $M$ be an even-dimensional manifold,
and $S_1, S_2, S_3$ a triple of pairwise
non-intersecting involutive sub-bundles of
$TM$ of dimension $\frac 1 2 \dim M$. Then 
$S_1, S_2, S_3$ is called {\bf a 3-web.}
Any 3-web on $M$ gives a rise to a natural connection
on $TM$, called {\bf a Chern connection}.
A Chern connection is one which preserves $S_i$, 
and its torsion vanishes on $S_1 \otimes S_2$;
such a connection exists, and is unique. 

Let $a,b,c\in \C \p1$ be three distinct points.
For any $SL(2)$-web, $S_a, S_b, S_c$ is clearly a 3-web.
In \cite{_JV:Instantons_} we proved that the
corresponding Chern connection is torsion-free
and holomorphic; also, it is independent from 
the choice of $a,b,c\in \C \p1$.
We also characterized such connections
in terms of holonomy, and characterized
an $SL(2)$-web in terms of a connection
with prescribed holonomy.

Furthermore, we constructed an $SL(2)$-web structure
on a component of the moduli space of rational curves 
on a twistor space of a hyperk\"ahler manifold.
By interpreting the moduli space of framed instanton bundles on $\C\p3$ in terms of rational
curves on the twistor space of the moduli space of framed bundles on $\C\p2$, we obtained a $SL(2)$-web on the smooth part of  the moduli space of framed instanton bundles on $\C\p3$.

In the present paper we explore this notion further,
studying those $SL(2)$-webs which appear as moduli spaces of 
rational lines in the twistor space of a hyperk\"ahler manifold. 

It turns out that (in addition to the $SL(2)$-web structure),
this space is equipped with the so-called {\em
trisymplectic structure} (see also Definition
\ref{_trisymple_mani_Definition_}).

\Definition \label{_trisy_intro_Definition_}
A {\bf weakly trisymplectic structure} on
a complex manifold $M$ is a 3-dimensional
subspace $\bOmega$ of $\Omega^2 M$ generated by a triple 
of holomorphic symplectic forms $\Omega_1, \Omega_2,
\Omega_3$, such that any linear combination
of $\Omega_1, \Omega_2, \Omega_3$
has rank $n=\dim M$, $\frac 1 2 n$, or 0.
If the set of degenerate forms in $\bOmega$
belongs to a non-degenerate quadric, $\bOmega$ is called
{\bf a trisymplectic structure}, and 
$(M,\bOmega)$ {\bf a trisymplectic manifold}.
\ed

In differential 
geometry, similar structures known as {\em 
hypersymplectic structures} were
studied by Arnol$'$d, Atiyah, Hitchin
and others (see e.g. \cite{_Arnold:hypersy_}).
The hypersymplectic manifolds are similar to hyperk\"ahler,
but instead of quaternions one deals with
an algebra $\Mat(2, \R)$ of split
quaternions. As one passes to complex manifolds
and complex-valued holomorphic symplectic
forms, the distinction between quaternions
and split quaternions becomes irrelevant.
Therefore, trisymplectic structures
serve as complexifications of 
both hypersymplectic and hyperk\"ahler
structures.

Consider a trisymplectic manifold
$(M, \Omega_1, \Omega_2, \Omega_3)$.
In Theorem \ref{_trisy_web_Theorem_} we show that
the set of degenerate linear combinations
of $\Omega_i$ is parametrized by
$\C\p1$ (up to a constant), and the
null-spaces of these 2-forms form
an $SL(2)$-web. We also prove that
the Chern connection associated with this
 $SL(2)$-web preserves the 2-forms $\Omega_i$
(Theorem \ref{_trisy_Chern_Theorem_}).
This allows one to characterize trisymplectic manifolds
in terms of the holonomy, similarly as it is done
in \cite{_JV:Instantons_} with $SL(2)$-webs.

\Claim
Let $M$ be a complex manifold. Then there is a bijective
correspondence between trisymplectic structures on $M$,
and holomorphic connections with holonomy
which lies in $G=Sp(n, \C)$ acting
on $\C^{2n}\otimes_\C \C^2$ 
trivially on the second tensor multiplier
and in the usual way on $\C^{2n}$.
\ec

{\bf Proof:} Follows immediately from
Theorem \ref{_trisy_Chern_Theorem_}. 
\endproof

\subsection{Trisymplectic reduction}

In complex geometry, the symplectic reduction
is understood as a way of constructing the GIT
quotient geometrically.
Consider a K\"ahler manifold $M$
equipped with an action of a  compact
Lie group $G$. Assume that $G$ acts by
holomorphic isometries, and admits an equivariant
moment map $M \stackrel \mu \arrow \g^*$,
where $\g^*$ is the dual of the Lie algebra 
of $G$. The {\bf symplectic reduction}
$M \2 G$ is the quotient of $\mu^{-1}(0)$
by $G$. This quotient is a complex variety,
K\"ahler outside of its singular points.
When $M$ is projective, one can identify 
$M \2 G$ with the GIT quotient of $M$ 
by the action of the complexified Lie group $G_{\C}$. 
For more details on GIT and its relation
to the symplectic quotient, please see
\cite{_Mumfort_Fogarty_Kirwan_}.

A hyperk\"ahler quotient is defined
in a similar way. Recall that a hyperk\"ahler manifold
is a Riemannian manifold equipped with a triple
of complex structures $I,J,K$ which are K\"ahler
and satisfy the
quaternionic relations. Suppose that
a compact Lie group acts on $(M,g)$ by 
isometries which are holomorphic
with respect to $I,J,K$; such maps
are called \emph{hyperk\"ahler isometries}. Suppose,
moreover, that there exists a triple
of moment maps $\mu_I, \mu_J, \mu_K:\; M \arrow \g^*$
associated with the symplectic forms
$\omega_I, \omega_J, \omega_K$
constructed from $g$ and $I, J, K$.
The \emph{hyperk\"ahler quotient} (\cite{_HKLR_})
$M \3 G$ is defined as 
$\big(\mu^{-1}_I(0)\cap \mu^{-1}_J(0)\cap \mu^{-1}_K(0)\big)/G$.
Similarly to the K\"ahler case, this quotient
is known to be hyperk\"ahler outside of the singular locus.

This result is easy to explain if one looks
at the 2-form $\Omega:= \omega_J + \1 \omega_K$.
This form is holomorphically symplectic
on $(M,I)$. Then the complex 
moment map $\mu_\C:= \mu_J + \1 \mu_K$
is holomorphic on $(M,I)$, and the
quotient $M\3 G := \mu_\C^{-1}\2 G_\C$
is a K\"ahler manifold.
Starting from $J$ and $K$ instead of $I$,
we construct other complex structures
on $M \3 G$; an easy linear-algebraic
argument is applied to show that these
three complex structures satisfy
the quaternionic relations. 

Carrying this argument a step farther,
we repeat it for trisymplectic manifolds,
as follows. Let $(M, \Omega_1, \Omega_2, \Omega_3)$
be a trisymplectic manifold, that is, a complex
manifold equipped with a triple of holomorphic
symplectic forms satisfying the rank conditions
of Definition \ref{_trisy_intro_Definition_},
and $G_\C$ a complex Lie group acting on $M$
by biholomorphisms preserving $\Omega_1, \Omega_2,
\Omega_3$. Let $\mu_1$, $\mu_2$, and $\mu_3$ be the triple of
holomorphic moment maps, associated to $\Omega_1, \Omega_2,
\Omega_3$, which 
are assumed to be equivariant. \footnote{Equivariance of a moment
map is sometimes assumed in the definition, but we consider
it as an additional constraint; please see 
Subsection \ref{_trisymple_red_Subsection_} for 
more details and a precisely worded definition.}

The \emph{trisymplectic reduction}
is the quotient of $\mu^{-1}_1(0)\cap \mu^{-1}_2(0)\cap \mu^{-1}_3(0)$
by $G_\C$. 

Under some reasonable non-degeneracy assumptions,
we can show that a trisymplectic quotient
is also a trisymplectic manifold (Theorem
\ref{_trisy_quotient_trisy_Theorem_}). 

Notice that since $G_\C$ is non-compact,
this quotient is not always well-defined. To rectify
this, a trisymplectic version of GIT quotient
is proposed (Subsection \ref{_trihy_red_def_Subsection_}), 
under the name of \emph{trihyperk\"ahler reduction}.

\subsection{Trihyperk\"ahler reduction}

Let $M$ be a hyperk\"ahler manifold, and 
$\Tw(M) \stackrel \pi\arrow \C \p1$ its twistor space
(Subsection \ref{_hk_Subsection_}). 
A holomorphic section of $\pi$ is called
\emph{regular} if the normal bundle 
to its image is isomorphic to a sum
of $\dim M$ copies of $\calo(1)$.
Denote by $\Sec_0(M)$ the space of 
regular sections of $\pi$ (cf. Definition
\ref{_regu_sec_Definition_}). 

One may think of $\Sec_0(M)$ as of a complexification of a
hyperk\"ahler manifold $M$. It is the main example of a
trisymplectic manifold used in this paper. 

The trisymplectic structure on $\Sec_0(M)$ is easy to obtain
explicitly. Let $L$ be a complex structure
on $M$ induced by the quaternions (Subsection 
\ref{_hk_Subsection_}), and $\Omega_L$
the corresponding holomorphic symplectic form on $(M, L)$.
Let \[ \ev_L:\; \Sec_0(M) \arrow (M, L)\]
be the {\em evaluation map} sending a twistor
section $s:\; \C \p1 \arrow \Tw(M)$ to 
$s(L)$ (we use the standard identification of
the space of induced complex structures with 
$\C \p1$). Let $\bOmega$ be the 3-dimensional space
of holomorphic 2-forms on $\Sec_0(M)$ generated 
by $\ev_I^*(\Omega_I)$, $\ev_J^*(\Omega_J)$
and $\ev_K^*(\Omega_K)$. Then $\bOmega$
defines a trisymplectic structure 
(Claim \ref{_trisy_on_Sec_Claim_}).

In this particular situation,
the trisymplectic quotient
can be defined using a GIT-like
construction as follows. 

Let $G$ be a compact Lie group
acting on a hyperk\"ahler manifold
$M$ by hyperk\"ahler isometries.
Then $G$ acts on $\Sec_0(M)$
preserving the trisymplectic
structure described above. Moreover, there is a natural
K\"ahler metric on $\Sec_0(M)$
constructed in \cite{_NHYM_} 
as follows. The twistor space
$\Tw(M)$ is naturally isomorphic,
as a smooth manifold, to $M \times \C\p1$.
Consider the product metric on $\Tw(M)$, and
let $\nu:\; \Sec_0(M)\arrow \R^{+}$
be a map associating to a complex curve
its total Riemannian volume.  In 
\cite{_NHYM_} it was shown that
$\nu$ is a K\"ahler potential,
that is, $dd^c \nu$ is a K\"ahler
form on $\Sec_0(M)$.

Let $\bOmega$ be the standard 3-dimensional space of holomorphic
2-forms on \\ $\Sec_0(M)$, 
\[ \bOmega=\langle \ev_I^*(\Omega_I), 
\ev_J^*(\Omega_J), \ev_K^*(\Omega_K)\rangle.
\]
Then the corresponding triple of holomorphic moment
maps is generated by $\mu^\C_I\circ \ev_I$,
$\mu^\C_J\circ \ev_J$, $\mu^\C_K\circ \ev_K$,
where $\mu^\C_L$ is a holomorphic moment map
of $(M,L)$. This gives a description of 
the zero set of the trisymplectic moment
map $\bMu_\C:\; \Sec_0(M)\arrow \g^*\otimes_\R \C^3$,

As follows from 
Proposition \ref{_trisymple_on_Sec_Proposition_},
a rational curve $s\in \Sec_0(M)$
lies in $\bMu^{-1}_\C(0)$ if and only if
$s$ lies in a set of all pairs
$(m,t)\in M\times\C\p1 \simeq \Tw(M)$
satisfying $\mu^\C_t(m)=0$, where
$\mu^\C_t:\; (M, t) \arrow \g^*\otimes_\R \C$
is the holomorphic moment map corresponding to the
complex structure $t$.

Now, the zero set  $\bMu^{-1}_\C(0)$ of the trisymplectic
moment map is a K\"ahler
manifold, with the K\"ahler metric $dd^c\nu$
defined as above. Therefore, one could define
the symplectic quotient $\bMu^{-1}_\C(0)\2 G$.
This quotient, denoted by $\Sec_0(M) \4 G$, 
is called the \emph{trihyperk\"ahler
quotient} of $\Sec_0(M)$ (see 
Definition \ref{_trihype_re_Definition_} for further details).

One of the main results of the present paper is the
following theorem relating the trihyperk\"ahler
quotient and the hyperk\"ahler quotient.

\Theorem\label{_trihy_hy_intro_Theorem_}
Let $M$ be flat hyperk\"ahler manifold, and $G$ a compact
Lie group acting on $M$ by hyperk\"ahler automorphisms.
Suppose that the hyperk\"ahler moment map exists, and the
hyperk\"ahler quotient $M\3 G$ is smooth.
Then there exists an open embedding
$\Sec_0(M)\4 G\stackrel \Psi\arrow \Sec_0(M\3 G)$,
which is compatible with the trisymplectic
structures on $\Sec_0(M)\4 G$ and $\Sec_0(M\3 G)$.
\et

{\bf Proof:} This is Theorem \ref{_trihk_red_equal_on_sec_Theorem_}.
\endproof

\hfill

The flatness of $M$, assumed in Theorem 
\ref{_trihy_hy_intro_Theorem_}, does not seem to be necessary, but
we were unable to prove it without this assumption. 

\subsection{Framed instanton bundles on $\C\p3$}

In Section \ref{_appl_}, the geometric techniques introduced in the previous
sections are applied to the study of the moduli space of framed instanton bundles
on $\C\p3$.

Recall that a holomorphic vector bundle $E\to\C\p3$ is called an {\em instanton bundle}
if $c_1(E)=0$ and $H^0(E(-1))=H^1(E(-2))=H^{2}(E(-2))=H^3(E(-3))=0$. The integer $c:=c_2(E)$ is called the {\em charge} of $E$.

This nomenclature comes from the fact that instanton bundles which are trivial on the lines of the twistor fibration $\C\p3\to S^4$ (a.k.a. \emph{real lines}) are in 1-1 correspondence, via twistor transform, with non-Hermitian anti-self-dual connections on $S^4$ (see  \cite[Section 3]{_JV:Instantons_}). Note however that there are instanton bundles which are not trivial on every real line.

Moreover, given a line $\ell\subset\p3$, a {\em framing}
on $E$ at $\ell$ is the choice of an isomorphism
$\phi:E|_\ell\to{\cal O}_{\ell}^{\oplus{\rm rk}E}$. A {\em
  framed instanton bundle} is a pair $(E,\phi)$ consisting
of an instanton bundle $E$ restricting trivially to $\ell$
and a framing $\phi$ at $\ell$. Two framed bundles $(E,\phi)$
and $(E',\phi')$ are
isomorphic if there exists a bundle isomorphism $\Psi:E\to
E'$ such that $\phi'=\phi\circ(\Psi|_\ell)$. 

Frenkel and the first named author established in \cite{FJ2} a 1-1 correspondence between isomorphism classes of framed instanton bundles on $\C\p3$ and solutions of the \emph{complex ADHM equations} (a.k.a \emph{1-dimensional ADHM equation}) in \cite{J-cr}). 

More precisely, let $V$ and $W$ be complex vector spaces of dimension $c$ and $r$, respectively, and consider matrices ($k=1,2$) $A_k,B_k\in{\rm End}(V)$,
$I_k\in{\rm Hom}(W,V)$ and $J_k\in{\rm Hom}(V,W)$. The 1-dimensional ADHM equations are
\begin{equation}\label{um}
\left\{ \begin{array}{l}
~[ A_1 , B_1 ]+I_1J_1 = 0 \\
~[ A_2 , B_2 ]+I_2J_2 = 0 \\
~[ A_1 , B_2 ] + [ A_2,B_1 ] + I_1J_2 + I_2J_1 = 0
\end{array} \right.
\end{equation}
One can show  \cite[Main Theorem]{FJ2} that the moduli space of framed instanton bundles on $\C\p3$ coincides with the set of \emph{globally regular} solutions (see Definition \ref{gr-def} below) of the 1-dimensional ADHM equations modulo the action of $GL(V)$.

It turns out that the three equations in (\ref{um}) are precisely the three components of a trisymplectic moment map
$\bMu_\C:\Sec_0(M)\to\gu(V)^*\otimes_\R\Gamma(\op1(2))$
on (an open subset of) a flat hyperk\"ahler manifold $M$, so that the moduli space of framed instanton bundles coincides with a trihyperk\"ahler reduction of a flat space (Theorem \ref{_insta_same_as_trihk_Theorem_}). It then follows that the moduli space of framed 
instanton bundles on $\C \p3$ of rank $r$ and charge $c$ is a smooth trisymplectic manifold of dimension $4rc$.

On the other hand, a \emph{mathematical instanton bundle} is a rank $2$ stable holomorphic vector bundle $E\to\C\p3$ with $c_1(E)=0$ and $H^1(E(-2))=0$. It is easy to see, using Serre duality and stability, that every mathematical instanton bundle is a rank $2$ instanton bundle. Conversely, every rank $2$ instanton bundle is stable, and thus a mathematical instanton bundle. We explore this fact to complete the paper in Section \ref{instantons_bundles} by showing how the smoothness of the moduli space of framed rank $2$ instanton bundles 
settles the smoothness part of a conjecture on the moduli space of mathematical instanton bundles; for its precise formulation, see \cite[Conjecture 1.2]{CTT}.


\section{$SL(2)$-webs on complex manifolds} \label{_3_webs_section_}


In this section, we repeat basic results about
$SL(2)$-webs on complex manifolds. We follow \cite{_JV:Instantons_}.

\subsection{$SL(2)$-webs and twistor sections}
\label{_3_webs_Subsection_}

The following notion is based on
a classical notion of a 3-web, developed
in the 1930-ies by Blaschke and Chern, and much
studied since then.

\Definition\label{_SL(2)_web_Definition_}
Let $M$ be a complex manifold,
$\dim_\C M = 2n$, and $S_t \subset TM$
a family of $n$-dimensional holomorphic sub-bundles,
parametrized by $t\in \C\p1$.
This family is called {\bf a holomorphic $SL(2)$-web}
if the following conditions are satisfied
\begin{description}
\item[(i)] Each $S_t$ is involutive (integrable), that is, 
$[S_t, S_t] \subset  S_t$.
\item[(ii)] For any distinct points $t, t'\in \C\p1$,
the foliations $S_t$, $S_{t'}$ are transversal:
$S_t \cap S_{t'}=\emptyset$.
\item[(iii)] The projections $P_{t,t'}:\; TM \arrow S_t\hookrightarrow TM$
of $TM$ to $S_t$ along $S_{t'}$ generate a 4-dimensional
sub-bundle $\goth a$ within $\End(TM)$, which is closed under multiplication.
\item[(iv)] Each fiber of $\goth a$ is isomorphic to the algebra $\Mat(2)$
of 2-dimensional matrices. 
\end{description}
\ed

Since $S_t$ and $S_{t'}$ are 
mid-dimensional, transversal foliations, it follows that
$T_mM = S_t(m)\oplus S_{t'}(m)$ for each 
point $m\in M$. According to this splitting,
$P_{t,t'}(m)$ is simply a projection onto the first factor.

\Definition
(see e.g. \cite{_Atiyah:conne_})
Let $B$ be a holomorphic vector bundle
over a complex manifold $M$. A {\bf holomorphic connection}
on $B$ is a holomorphic differential operator
$\nabla:\; B \arrow B \otimes \Omega^1 M$
satisfying $\nabla(fb) = b \otimes df + f \nabla(b)$,
for any folomorphic function $f$ on $M$.
\ed

\Remark
Let $\nabla$ be a holomorphic connection on a holomorphic
bundle, considered as a map $\nabla:\; B \arrow B \otimes \Lambda^{1,0} M$,
and $\bar \6:\; B \arrow B \otimes \Lambda^{0,1} M$ the
holomorphic structure operator. The sum 
$\nabla_f:=\nabla+ \bar\6$ is clearly a connection.
Since $\nabla$ is holomorphic, $\nabla\bar\6 + \bar\6 \nabla=0$,
hence the curvature $\nabla_f^2$ is of type $(2,0)$.
The converse is also true: a $(1,0)$-part of a 
connection with curvature of type $(2,0)$
is always a holomorphic connection.
\er

\Proposition\label{_torsion-free_Proposition_}
(\cite{_JV:Instantons_})
Let $S_t, t\in \C\p1$ be an $SL(2)$-web. 
Then there exists a unique torsion-free holomorphic
connection preserving $S_t$, for all $t\in \C \p1$. \endproof
\ep

\Definition
This connection is called {\bf a Chern connection}
of an $SL(2)$-web.
\ed

\Theorem
(\cite[Theorem 2.13]{_JV:Instantons_})
Let $M$ be a manifold equipped with a holomorphic 
$SL(2)$-web. Then its Chern connection is a torsion-free
affine holomorphic connection with holonomy in
$GL(n, \C)$ acting on $\C^{2n}$ as a centralizer of 
an $SL(2)$-action, where $\C^{2n}$ is a direct
sum of $n$ irreducible $GL(2)$-representations
of weight 1. Conversely, every connection with
such holonomy preserves a holomorphic $SL(2)$-web.
\endproof 
\et

\subsection{Hyperk\"ahler manifolds}
\label{_hk_Subsection_}

\Definition
Let $(M,g)$ be a Riemannian manifold, and $I,J,K$
endomorphisms of the tangent bundle $TM$ satisfying the
quaternionic relations
\[
I^2=J^2=K^2=IJK=-\Id_{TM}.
\]
The triple $(I,J,K)$ together with
the metric $g$ is called {\bf a hyperk\"ahler structure}
if $I, J$ and $K$ are integrable and K\"ahler with respect to $g$.
\ed

Consider the K\"ahler forms $\omega_I, \omega_J, \omega_K$
on $M$:
\begin{equation}\label{_omega_I,J,K_defi_Equation_}
\omega_I(\cdot, \cdot):= g(\cdot, I\cdot), \ \
\omega_J(\cdot, \cdot):= g(\cdot, J\cdot), \ \
\omega_K(\cdot, \cdot):= g(\cdot, K\cdot).
\end{equation}
An elementary linear-algebraic calculation implies
that the 2-form 
\begin{equation}\label{_holo_symple_on_hk_Equation_}
\Omega:=\omega_J+\1\omega_K
\end{equation}
is of Hodge type $(2,0)$
on $(M,I)$. This form is clearly closed and
non-degenerate, hence it is a holomorphic symplectic form.

In algebraic geometry, the word ``hyperk\"ahler''
is essentially synonymous with ``holomorphically
symplectic'', due to the following theorem, which is
implied by Yau's solution of Calabi conjecture
(\cite{_Beauville_,_Besse:Einst_Manifo_}).

\Theorem\label{_Calabi-Yau_Theorem_}
Let $M$ be a compact, K\"ahler, holomorphically
symplectic manifold, $\omega$ its K\"ahler form, $\dim_\C M =2n$.
Denote by $\Omega$ the holomorphic symplectic form on $M$.
Assume that $\int_M \omega^{2n}=\int_M (\Re\Omega)^{2n}$.
Then there exists a unique hyperk\"ahler metric $g$ within the same
K\"ahler class as $\omega$, and a unique hyperk\"ahler structure
$(I,J,K,g)$, with $\omega_J = \Re\Omega$, $\omega_K = \im\Omega$.
\endproof \et

Every hyperk\"ahler structure induces a whole 2-dimensional
sphere of complex structures on $M$, as follows. 
Consider a triple $a, b, c\in\R$, $a^2 + b^2+ c^2=1$,
and let $L:= aI + bJ +cK$ be the corresponging quaternion. 
Quaternionic relations imply immediately that $L^2=-1$,
hence $L$ is an almost complex structure. 
Since $I, J, K$ are K\"ahler, they are parallel with respect
to the Levi-Civita connection. Therefore, $L$ is also parallel.
Any parallel complex structure is integrable and K\"ahler.
Complex structures of the form $L= aI + bJ +cK$ are called the
\emph{complex structures induced by the hyperk\"ahler structure}.
The corresponding complex manifold is denoted by $(M,L)$.
There is a 2-dimensional holomorphic family of 
induced complex structures, and the total space
of this family is called the \emph{\bf twistor space}
of a hyperk\"ahler manifold; it is constructed as follows. 

Let $M$ be a hyperk\"ahler manifold. Consider the product  $\Tw(M) = M
\times S^2$. Embed the sphere $S^2 \subset {\mathbb H}$ into the quaternion algebra
${\mathbb H}$ as the subset of all quaternions $J$ with $J^2 = -1$. For every point
$x = m \times J \in X = M \times S^2$ the tangent space $T_x\Tw(M)$ is
canonically decomposed $T_xX = T_mM \oplus T_JS^2$. Identify $S^2$
with $\C \p1$, and let $I_J:T_JS^2 \to T_JS^2$ be the complex 
structure operator. Consider the complex structure $I_m:T_mM \to T_mM$ 
on $M$ induced by $J \in S^2 \subset {\mathbb H}$.

The operator $I_{\Tw} = I_m \oplus I_J:T_x\Tw(M) \to T_x\Tw(M)$ 
satisfies $I_{\Tw} \circ I_{\Tw} =
-1$. It depends smoothly on the point $x$, hence it defines 
an almost complex structure on $\Tw(M)$. This almost 
complex structure is known to be integrable
(see e.g. \cite{_Salamon_}). 

\Definition
The space $\Tw(M)$ constructed above is called {\bf the twistor space}
of the hyperk\"ahler manifold $M$.
\ed

\subsection{An example: rational curves on a twistor
  space}
\label{_ratcurves_Subsection_}

The basic example of holomorphic $SL(2)$-webs
comes from hyperk\"ahler geometry. Let $M$ be a hyperk\"ahler manifold, and 
$\Tw(M)$ its twistor space. Denote by $\Sec(M)$ the
space of holomorphic sections of the
twistor fibration ${\rm Tw}(M)\stackrel\pi\arrow\C\p1$, also known as \emph{twistor sections}.

We consider $\Sec(M)$ as a complex variety, with the
complex structure induced from the Douady space of rational
curves on $\Tw(M)$. Clearly, for any $C \in \Sec(M)$,
$T_C \Sec(M)$ is a subspace in the space of sections of 
the normal bundle $N_C$. This normal bundle is
naturally identified with $T_\pi\Tw(M)\restrict C$, where
$T_\pi\Tw(M)$ denotes the vertical tangent bundle.

For each point $m \in M$, one has a horizontal section
$C_m:=\{m\} \times \C\p1$ of $\pi$. The space of
horizontal sections of $\pi$ is denoted $\Sec_{hor}(M)$;
it is naturally identified with $M$. It is easy to check
that $N C_m= {\cal O}(1)^{\dim M}$,
hence some neighbourhood of $\Sec_{hor}(M)\subset 
\Sec(M)$ is a smooth manifold of
dimension $2\dim M$. It is easy to see
that $\Sec(M)$ is a complexification of
$M \simeq \Sec_{hor}(M)$, considered as a
real analytic manifold (see \cite{_Verbitsky:hypercomple_});
in fact, the real analytic structure on $M$ is constructed
by identifying a germ of $\Sec(M)$ with a complexification. 

\Definition\label{_regu_sec_Definition_}
A twistor section $C\in \Sec(M)$ whose normal bundle $N_C$ is isomorphic to
${\cal O}(1)^{\dim M}$ is called {\bf regular}.
\ed

Let $\Sec_0(M)$ be the subset of $\Sec(M)$ consisting of regular twistor sections.
Clearly, $\Sec_0(M)$ is a smooth, Zariski open subvariety in $\Sec(M)$, containing the set $\Sec_{hor}(M)$ of horizontal twistor sections.

\Proposition\label{web_on_Sec}
The space $\Sec_0(M)$ of regular twistor sections admits the structure of a holomorphic $SL(2)$-web.
\ep

\begin{proof} A holomorphic $SL(2)$-web on $\Sec_0(M)$ can be constructed as follows. For each $C \in \Sec_0(M)$ and $t\in \C\p1=C$, define $S_t\subset TC= \Gamma_C (N_C)$ as the space of all sections of $N_C$ vanishing at $t\in C$.

It is not difficult to check that this is a holomorphic $SL(2)$-web. 
Transversality of $S_t$ and $S_{t'}$ follows easily from the fact that
a section of ${\cal O}(1)$ vanishing at two points must be zero.
Integrability of $S_t$ is also clear, since the leaves of $S_t$ are fibers of the evaluation map $ev_t:\; \Sec(M) \arrow \Tw(M)$, mapping $C: \; \C\p1
\arrow \Tw(M)$ to $C(t)$. 

Moreover, let $S$ be a complex vector space of dimension $\dim M$, so that $N_C\simeq S\otimes{\cal O}(1)$. Note that $\Gamma_C (N_C)\simeq S\otimes_\C \C^2$
and that the projection maps $P_{t, t'}$ act on $V \otimes_\C \C^2$ only through the second component; it is then easy to see that conditions (iii) and (iv) in Definition \ref{_SL(2)_web_Definition_} are also satisfied.
\end{proof}

\hfill

The space $\Sec_0(M)$ is the main example of an $SL(2)$-web
manifold we consider in this paper; the structure defined in the proof above is called the standard holomorphic $SL(2)$-web structure on $\Sec_0(M)$.


\section{Trisymplectic structures on vector spaces}


\subsection{Trisymplectic structures and $\Mat(2)$-action}
\label{_Mat_2_Subsection_}

This section is dedicated to the study of the following linear algebraic objects, which will be the basic ingredient in the new geometric structures we will introduce later.

\Definition
Let $\bOmega$ be a 3-dimensional space of complex linear 2-forms
on an even dimensional complex vector space $V$. Assume that
\begin{description}
\item[(i)] $\bOmega$ contains a non-degenerate form;
\item[(ii)] For each non-zero degenerate $\Omega\in \bOmega$,
one has $\rk \Omega = \frac 1 2 \dim V$.
\end{description}
Then $\bOmega$ is called {\bf a weakly trisymplectic structure on $V$},
and $(V, \bOmega)$ {\bf a weakly trisymplectic space.}

The pair $(V, \bOmega)$ is called {\bf a trisymplectic space} if, in addition, the set $S$ of all degenerate forms $\Omega\in \bOmega$ lies in a non-degenerate
quadric $R\subset \bOmega$.
\ed

\Remark
If $V$ is not a complex, but a real  vector space, 
this notion defines either a quaternionic Hermitian structure,
or a structure known as {\bf hypersymplectic} and associated
with a action of split quaternions, c.f. \cite{_Arnold:hypersy_}.
\er

\begin{example}\label{_Clifford_Example_}
Let $V=V_1\oplus V_1^*$ be a $2n$-dimensional
vector space ($n>1$) equipped with a scalar product $\langle \cdot, \cdot\rangle$
in a usual way,
$\Cl(V)=\Mat(W)$ its Clifford algebra, $h$ the
standard metric on the spinorial representation $W:=\Lambda^* V_1$, and
$V \hookrightarrow \goth{so}(W,h)$ the Clifford action. 
Identifying $\goth{so}(W)$ with $\Lambda^2 W$, we obtain 
a map $V \stackrel \phi \hookrightarrow \Lambda^2 W$. It is easy to see
that $\rk\phi(v)=\dim \Lambda^2 W$ unless $v\in V_1$ or
$v\in V_1^*$, and in the latter case $\rk\phi(v)=\frac 1 2 \dim \Lambda^2 W$.
If $L$ is a non-isotropic 3-dimensional subspace $V$, then the pair 
$(V,\phi(L))$ yields an example of a weakly trisymplectic space. Moreover, if 
the restriction $\langle \cdot, \cdot\rangle\restrict L$ is non-degenerate, then
the pair $(V,\phi(L))$ is a trisymplectic space.
\end{example}

\Lemma\label{_annula_dont_interse_Lemma_}
Let $(V, \bOmega)$ be a be a weakly trisymplectic space,
and $\Omega_1, \Omega_2\in \bOmega$ two non-zero, 
degenerate forms which are not proportional.
Then the annihilator $\Ann(\Omega_1)$
does not intersect $\Ann(\Omega_2)$.
\el

{\bf Proof:} Indeed, if these two spaces intersect in a subspace 
$C\subset V$, strictly contained in $\Ann(\Omega_1)$,
some linear combination of $\Omega_1$
and $\Omega_2$ would have annihilator $C$,
which is impossible, because $0< \dim C < \frac 1 2 \dim V$.
If $\Ann(\Omega_1)= \Ann(\Omega_2)$, we could
consider $\Omega_1, \Omega_2$ as non-degenerate 
forms $\Omega_1\restrict W, \Omega_2\restrict W$ 
on $W:=V/\Ann(\Omega_2)$, which are obviously 
not proportional. We interpret $\Omega_i\restrict W$ as a bijective map
from $W$ to $W^*$. Let $w$ be an eigenvector of the operator
$\left(\Omega_2\restrict W\right)^{-1} \circ \Omega_1\restrict W\in \End(W)$, and 
$\lambda$ its eigenvalue. Then $\Omega_1(w, x) = \lambda \Omega_2(w,x)$,
for each $x\in W$, hence $w$ lies in the annihilator of
$\Omega_1\restrict W - \lambda\Omega_2\restrict W$. Then 
$\Omega_1 - \lambda\Omega_2$ has an annihilator strictly
larger than $\Ann(\Omega_2)$, which is impossible,
unless $\Omega_1 = \lambda\Omega_2$. \endproof

\hfill

Given two non-proportional, degenerate forms $\Omega_1,
\Omega_2\in \bOmega$, one has that
$V= \Ann(\Omega_1)\oplus \Ann(\Omega_2)$ by the previous
Lemma. Thus one can consider projection
operators $\Pi_{\Omega_1,\Omega_2}$ of $V$ onto
$\Ann(\Omega_1)$ along $\Ann(\Omega_2)$.
It turns out that the Clifford algebra action
used in Example \ref{_Clifford_Example_}
can be reconstructed from the trisymplectic
structure.  

\Proposition\label{_trisymple_Cliff_Proposition_}
Let $(V, \bOmega)$ be a weakly 
trisymplectic vector space, and let 
$H\subset \End(V)$ be the subspace generated by
projections $\Pi_{\Omega_1,\Omega_2}$ for all pairs of non-proportional,
degenerate forms $\Omega_1, \Omega_2\in \bOmega$. Then 
 $H$ is a 4-dimensional subalgebra of $\End(V)$,
isomorphic to a Clifford algebra $\Cl(W)$, where $W$ is a 2-dimensional
subspace in $H$.
\ep

\begin{proof} 
{\bf Step 1:} We prove that the space $H\subset \End(V)$
is an algebra, and satisfies $\dim H\leq 4$.

Let $\Omega_1, \Omega_2 \in \bOmega$ be two forms
which are not proportional, and assume $\Omega_2$ is non-degenerate. 
Consider the operator $\phi_{\Omega_1, \Omega_2}\in \End(V)$, 
defined by $\phi_{\Omega_1, \Omega_2}:= \Omega_1 \circ \Omega_2^{-1}$,
where $\Omega_1, \Omega_2$ are understood as operators from $V$ to $V^*$.
As in the proof of Lemma \ref{_annula_dont_interse_Lemma_},
consider an eigenvector $v$ of $\phi_{\Omega_1, \Omega_2}$,
with the eigenvalue $\lambda$. Then $\Omega_1(v, x) = \lambda \Omega_2(v,x)$,
for each $x\in V$, hence $v$ lies in the annihilator of 
$\Omega:= \Omega_1 - \lambda\Omega_2$. Since $\Omega_i$ 
are non-proportional, $\Omega$ is non-zero, hence
$\rk \Omega = \frac 1 2 \dim V$. This implies that each
eigenspace of $\phi_{\Omega_1, \Omega_2}$ has dimension
$\frac 1 2 \dim V$. Choosing another eigenvalue $\lambda'$
and repeating this procedure, we obtain a $2$-form
$\Omega':= \Omega_1 - \lambda'\Omega_2$, also degenerate.
Let $S$ and $S'$ be the annihilators of $\Omega$ and $\Omega'$,
respectively. Let also $\Pi_{S,S'}$ and $\Pi_{S',S}$ be the projections of
$V$ onto $S$ or $S'$ along $S'$ or $S$.
It follows that 
\begin{equation}\label{_phi_through_projectors_Equation_} 
 \phi_{\Omega_1, \Omega_2}=\lambda\Pi_{S',S}+\lambda'\Pi_{S,S'}.
\end{equation}
and $\phi_{\Omega_1, \Omega_2}$ 
can be expressed in an appropriate basis by the matrix
\begin{equation}\label{_phi_matrix_Equation_}
\phi_{\Omega_1, \Omega_2}=\begin{pmatrix}
\lambda &0&0 &\hdotsfor{1} &0&0&0\\
0&\lambda &0 &\hdotsfor{1} &0&0&0\\
0&0&\lambda &\hdotsfor{1} &0&0&0\\
\vdots&\vdots&\vdots&
\ddots
&\vdots&\vdots&\vdots\\
0&0&0 &\hdotsfor{1} &\lambda'&0&0\\
0&0&0 &\hdotsfor{1} &0&\lambda'&0\\
0&0&0 &\hdotsfor{1} &0&0&\lambda'
\end{pmatrix}.
\end{equation}

{}From \eqref{_phi_through_projectors_Equation_} 
it is clear that the space $H$ is generated by all 
$\phi_{\Omega_1, \Omega_2}$. It is also clear that when $\Omega_2$ is also
non-degenerate, the operator $\phi_{\Omega_2, \Omega_1}$
can be expressed as a linear combination of 
$\phi_{\Omega_1, \Omega_2}$ and 
$\phi_{\Omega_1, \Omega_1}=\IdId_V$.

Since non-degenerate forms constitute 
a dense open subset of $\bOmega$, one can choose a basis
$\Omega_1, \Omega_2, \Omega_3$ consisting of non-degenerate forms. 
Since $\phi_{\Omega_i, \Omega}$ is expressed as
a linear combination of $\phi_{\Omega, \Omega_i}$ and $\IdId_V$,
and $\phi_{\Omega, \Omega_i}$ is linear in $\Omega$,
the vector space $H$ is generated by $\phi_{\Omega_i, \Omega_j}, i<j$,
and $\IdId_V$. Therefore, $H$ is at most 4-dimensional. 
{}From \eqref{_phi_through_projectors_Equation_} it is clear
that for any non-degenerate $\Omega_1$, $\Omega_2$,
the operator $\phi_{\Omega_1, \Omega_2}$ can be expressed
through $\phi_{\Omega_2, \Omega_1}= \phi_{\Omega_1, \Omega_2}^{-1}$
and $\IdId_V$: 
\begin{equation}\label{_reverse_or_Equation_}
  \phi_{\Omega_2, \Omega_1} = a \phi_{\Omega_2, \Omega_1} + b \IdId_V.
\end{equation}
Since 
\begin{equation}\label{_multiply_phi_Equation_}
  \phi_{\Omega_i, \Omega_j}\circ \phi_{\Omega_j, \Omega_k}=\phi_{\Omega_i,\Omega_k},
\end{equation}
the space $H$ is a subalgebra in $\End(V)$
(to multiply some of $\phi_{\Omega_i, \Omega_j}$ and
$\phi_{\Omega_{i'}, \Omega_{j'}}$, you would have to reverse
the order when necessary, using \eqref{_reverse_or_Equation_}, 
and then apply \eqref{_multiply_phi_Equation_}).

We proved that $H$ is an algebra, spanned by
$\phi_{\Omega_i, \Omega_j}$, and $\dim H \leq 4$.

\hfill

{\bf Step 2:}
We prove that a general element of $H$ can be written 
as $h= \phi_{\Omega, \Omega'}$, 
for some $\Omega, \Omega'\in \bOmega$.
Indeed, as we have shown, a general element of $H$ has form 
\begin{equation}\label{_h_genera_Equation_} 
h= a\phi_{\Omega_1, \Omega_2} + b\phi_{\Omega_1, \Omega_3} 
   + c \phi_{\Omega_2, \Omega_3} + d \IdId_V,
\end{equation}
where $\Omega_1, \Omega_2, \Omega_3$ is a basis of non-degenerate forms for $\bOmega$.
Since $\phi$ is linear in the first argument, this gives
\begin{equation}\label{_bOmega_1+cOmega_2_Equation_}
  h= a\phi_{\Omega_1, \Omega_2} + \phi_{b\Omega_1+c\Omega_2, \Omega_3} + d c \IdId_V.
\end{equation}
If the form $b\Omega_1+c\Omega_2$ is non-degenerate, we use
the reversal as indicated in \eqref{_reverse_or_Equation_}, 
obtaining
\[
\phi_{b\Omega_1+c\Omega_2, \Omega_3}=\lambda\phi_{\Omega_3,b\Omega_1+c\Omega_2}
+ \lambda'\IdId_V,
\]
write, similarly,
\[ 
  a\phi_{\Omega_1, \Omega_2}=\mu \phi_{\Omega_1, b\Omega_1+c\Omega_2}
  + \mu' \lambda'\IdId_V,
\]
then, adding the last two formulae, obtain
\[
h= (\mu +1)\phi_{\Omega_1+\Omega_3, b\Omega_1+c\Omega_2} + (\lambda'+\mu'+d)\IdId_V.
\]
Finally, $\phi_{\Omega, \Omega'}+ c\IdId_V= \phi_{\Omega, \Omega'+c\Omega}$.
This implies that a general $h\in H$ can be written as 
$\phi_{\Omega, \Omega'}$, for appropriate $\Omega,\Omega'$.

\hfill

{\bf Step 3:} We prove that $H$ is a quotient of a Clifford algebra.
From Step 2 and \eqref{_phi_matrix_Equation_}
it follows that a characteristic polynomial of any $h\in H$
has form
\begin{equation}\label{_H_charact_Equation_}
\Char_h(t)=P_h(t)^{2n},
\end{equation}
where $4n=\dim V$, and $P_h(t) = t^2 + a(h) t +b(h)$ 
is a quadratic polynomial with roots $\lambda, \lambda'$.
The map $h \arrow b(h)$ is a homogeneous polynomial 
function of degree 2 on $H$, and $a(h)$ is linear,
$a(h) = -\frac{\Tr(h)}{2n}$. From \eqref{_phi_matrix_Equation_}
the following analogue of Cayley-Hamilton theorem is apparent:
$P_h(h)=0$, giving $h^2- a(h) h + b(h)=0$.
Applying this to $h=x+y$, we obtain 
\begin{equation}\label{_HC_to_sum_Equation_}
(x+y)^2-(a(x)+a(y))(x+y)+ b(x+y)=0.
\end{equation}
Denote by $q(x,y)$ the bilinear form $q(x,y):= b(x+y)-b(x)-b(y)$.
Consider the homogeneous part of the equation \eqref{_HC_to_sum_Equation_}
of homogeneity 1 on x and y. We obtain
that the following equation holds for all $x, y \in H$:
\begin{equation}\label{_CH_homo_anticomm_Equation_}
xy + yx=(a(x) y + a(y) x- q(x,y)).
\end{equation}

Choose now $x, y \in H$ in such a way that $a(x)=a(y)=0$.
We pick $x = \phi_{\Omega_1, \Omega_2}$ and $y= \phi_{\Omega_2, \Omega_3}$,
for linearly independent, non-degenerate $\Omega_i$.
Then adjust $\Omega_1$ and $\Omega_3$ by replacing it with
$\Omega_1+ c \Omega_2$ and $\Omega_3+ c' \Omega_2$ in such a way
that $a(x)=a(y)=0$ (this is possible, as follows from
\eqref{_phi_matrix_Equation_}). 
Then \eqref{_CH_homo_anticomm_Equation_} gives
$xy+yx=-q(x,y)$. We have shown that $x, y$ satisfy 
relations for the Clifford algebra.
Also, $x$ and $y$ generate $H$ (Step 2). This gives a surjection
$\Psi:\; \Cl(W,q)\arrow H$, where $W=\langle x,y\rangle$.

\hfill

{\bf Step 4:}
We have constructed a surjective homomorphism
$\Psi:\; \Cl(W,q)\arrow H$, and now we are going
to show that it is also injective. 

Notice that the space
$H_{\Omega'}:=\bigcup_{\Omega\in \bOmega} \phi_{\Omega, \Omega'}$ 
is 3-dimensional, because the operator 
$\phi_{\Omega, \Omega'}$ uniquely determines the form $\Omega$,
if $\Omega'$ is fixed: 
\[ \phi_{\Omega, \Omega'} (\Omega')=\Omega.
\]
Then $\Psi(\phi_{\Omega,\Omega'})\neq 0$ for any
$\Omega,\Omega'$. This implies that the ideal
$\ker \Psi$ is at most 1-dimensional.

To see that $\ker \Psi=0$, 
it remains to show that $\Cl(W)$ has no 1-dimensional ideals.
If $q$ is non-degenerate on $W$, the Clifford 
algebra $\Cl(W,q)$ is isomorphic to $\Mat(2)$;
this algebra is simple, hence any non-zero homomorphism
$\Psi:\; \Cl(W,q)\arrow H$ is injective.

If $q$ is degenerate, let $x, y$ be an orthogonal basis in $W$,
$q(x,x)=1$, $q(y,y)=0$. We are going to show that $\Cl(W)$ 
contains no 1-dimensional ideals.

Let $a,b,c,d\in \C$ be any numbers.
Then $y\cdot (a +bx + cy +d xy)= ay -bxy$ and 
$(a +bx + cy +d xy)\cdot y=ay +bxy$. This implies that
any ideal containing $a +bx + cy +d xy$ with $a,b\neq 0$
is at least 2-dimensional. 

Now, $x\cdot (cy+dxy)=cxy +dy$ and $(cy+dxy)\cdot x=-cxy +dy$.
Therefore, $c=0$ and any 1-dimensional ideal is generated by 
$xy$, but then in contains $y$, giving a contradiction.

We proved Proposition \ref{_trisymple_Cliff_Proposition_}. 
\end{proof}

\Claim\label{_Mat_2_trisy_Claim_}
Under the hypotheses of Proposition \ref{_trisymple_Cliff_Proposition_},
the weakly trisymplectic structure is trisymplectic if and only if
the Clifford algebra $H=\Cl(W)$ is isomorphic to $\Mat(2)$.
\ec

\begin{proof} Given a non-degenerate form $\Omega\in \bOmega$, let
$H_{\Omega}\subset H$ the subspace
$H_{\Omega}:=\bigcup_{\Omega'\in \bOmega} \phi_{\Omega', \Omega}$. 
The form $\Omega_1\in \bOmega$ is degenerate if and only if
$\det(\phi_{\Omega,\Omega'})=0$. However, in $H$ one has
$\det(h)= b(h)^{2n}$, where $b(h)$ is the quadratic polynomial
defined in Step 3 of the proof of Proposition \ref{_trisymple_Cliff_Proposition_}.
From Proposition \ref{_trisymple_Cliff_Proposition_} we obtain that
$H$ is isomorphic either to $\Mat(2)=\Cl(W,+,+)$ (the Clifford algebra
of a 2-dimensional space with definite scalar product) or to $\Cl(W,+,0)$
(the Clifford algebra of a 2-dimensional space with scalar product 
of rank 1). In the first case, any invariant 2-form is non-degenerate, while
in the second case, any invariant 2-form has rank 2. 

By definition, $\bOmega$ is trisymplectic if the restriction
$b\restrict {H_\bOmega}$ is non-degenerate. However, for
$\Cl(W,+,0)$ this is impossible, because $b$ has rank 2.
This implies that for a trisymplectic structure, one has
$H\cong \Mat(2)$.

To prove the converse, consider a weakly trisymplectic
structure $\bOmega\subset \Lambda^2 V$ with $H\cong \Mat(2)$.
As in Proposition \ref{_trisymple_Cliff_Proposition_}, Step 3,
find non-proportional forms $\Omega_1, \Omega_2 \in \bOmega$
such that $a(x)=a(y)=0$,
where $x=\phi_{\Omega,\Omega_1}$ and $y=\phi_{\Omega,\Omega_2}$,
and $a(h) =-\frac{\Tr(h)}{2n}$
As shown in Proposition \ref{_trisymple_Cliff_Proposition_}, Step 3,
$H\cong \Cl(W,b)$, where $W=\langle x, y\rangle$.
Since $H_0=\langle 1, x,y\rangle$, to prove that
$\bOmega$ is trisymplectic it remains to show that
the quadratic form $b\restrict{H_0}$ is non-degenerate.
Since an invariant scalar product on $\Mat(2)$
is unique, up to a constant, it follows that $b(h)$ 
is proportional to the determinant on $\Mat(2)$.
Therefore, the corresponding scalar product can
be written as $q(t, t')=\Tr(tt')$.
On $\Cl(W)$, one has $\Tr(x)=\Tr(y)=0$, hence
$1\bot_q\langle x, y\rangle$. The restriction 
$q\restrict {\langle x, y\rangle}$ is non-degenerate
by Proposition \ref{_trisymple_Cliff_Proposition_}, Step 3.
Therefore, $q$ restricted to a 3-dimensional space
$H_0=\langle 1, x,y\rangle$ is also non-degenerate.
We proved that $\bOmega$ is trisymplectic
whenever $H\cong \Mat(2)$. \end{proof}

\Remark\label{_Lie_H_action_Remark_}
Let $(V,\bOmega)$ be a trisymplectic vector space,
and let $H\cong \Mat(2)$ be the algebra
constructed in Claim \ref{_Mat_2_trisy_Claim_}.
Then $\bOmega$ is invariant under the Lie algebra 
action induced by $H$. Moreover,
there exists a non-degenerate, $\g$-invariant 
quadratic form $Q$ on $\bOmega$, unique up to a constant, 
such that $\Omega\in \bOmega$ is degenerate if and
only if $Q(\Omega,\Omega)=0$. Indeed, the space of
degenerate forms in $\bOmega$ is a non-degenerate
quadric.
\er

In a similar way, we obtain the following useful corollary.

\Claim\label{_trisymple_subspa_Claim_}
Let $(V, \bOmega)$ be a trisymplectic space, and $W\subset V$
a complex subspace. Then $\bOmega\restrict W$
is a trisymplectic space if and only if the following two
assumptions hold.
\begin{description}
\item[(i)] The space $W$ is $H$-invariant, where $H\cong \Mat(2)$
is the subalgebra of $\End(V)$ constructed in Proposition 
\ref{_trisymple_Cliff_Proposition_}.
\item[(ii)] A general 2-form $\Omega\in \bOmega$ is non-degenerate on $W$.
\end{description} 
\ec
{\bf Proof:} Let $Z\subset H$ be the set of idempotents
in $H$. Consider the standard action of $\g\cong \goth{sl}(2)$
on $V$ constructed in Proposition \ref{_trisymple_Cliff_Proposition_}. 
Clearly, $V$ is a direct sum of several 2-dimensional
irreducible representations of $\goth{sl}(2)$.

It is easy to see that for every $\Pi\in Z$
there exists a Cartan subalgebra $\goth h\subset \g$
such that $\Pi$ is a projection of $V$ onto one of
two weight components of the weight decomposition
associated with $\goth h$. If $W\subset V$ is an $H$-submodule,
then $\Pi\restrict W$ is a projection to a weight component
$W_0\subset W$ of dimension $\frac 1 2 \dim W$. 
{}From \eqref{_phi_through_projectors_Equation_} 
it is also clear that for any degenerate form $\Omega\in \bOmega$,
an annihilator of a restriction $\Omega\restrict W$ is equal to 
the weight component $W_0$, for an appropriate choice of 
Cartan subalgebra. Therefore, 
\[ \dim \left(\Ann \Omega\restrict W\right) = \frac 1 2 \dim W.
\] Similarly, \eqref{_phi_through_projectors_Equation_} 
implies that a non-\-de\-ge\-ne\-rate form is restricted to a 
non-\-de\-ge\-ne\-rate  form. We obtain that the restriction 
$\bOmega\restrict W$ to an $H$-submodule is always
a trisymplectic structure on $W$. 

To obtain the converse statement, take two non-degenerate,
non-collinear forms $\Omega_1, \Omega_2\in \bOmega$, and notice that
there exist precisely two distinct numbers $t =\lambda, \lambda'$
for which $\Omega_1 + t\Omega_2$ is degenerate
(see Proposition \ref{_trisymple_Cliff_Proposition_}, Step 1).
Let $S, S'$ be the corresponding annihilator spaces.
By construction, the algebra
$H$ is generated, as a linear space, by the projection operators
$\Pi_{S, S'}$, projecting $V$ to $S$ along $S'$. For
any $W\subset V$ such that the restriction
$\bOmega\restrict W$ is trisymplectic, one has 
$W=S\cap W \oplus S'\cap W$, hence $\Pi_{S,S'}$ 
preserves $W$. Therefore, $W$ is an $H$-submodule.
\endproof

\Definition\label{_non_dege_trisy_Definition_}
Let $(V, \bOmega)$ be a trisymplectic space,
and $W\subset V$ a vector subspace.
Consider the action of $H\simeq \Mat(2)$ on $V$ induced
by the trisymplectic structure. A subspace
$W\subset V$ is called {\bf non-degenerate}
if the subspace $H\cdot W\subset V$ is trisymplectic.
\ed

\Remark\label{_non_dege_form_Remark_}
By Claim \ref{_trisymple_subspa_Claim_}, $W$ is non-degenerate
if and only if the restriction of $\Omega$ to $H\cdot W$ is non-degenerate for some $\Omega\in \bOmega$.
\er

\subsection[Trisymplectic structures and invariant quadratic forms on 
vector spaces with $\Mat(2)$-action]
{Trisymplectic structures and invariant \\ quadratic forms on 
vector spaces with $\Mat(2)$-action}

Let $V$ be a complex vector space with a standard action of
the matrix algebra $\Mat(2)$, i.e. $V \cong V_0\otimes \C^2$ and $\Mat(2)$ acts only through the second factor. 
An easy way to obtain a trisymplectic structure
is to use non-degenerate, invariant quadratic forms on $V$.

Consider the natural $SL(2)$-action on $V$ induced by $\Mat(2)$,
and extend it multiplicatively to all tensor powers of $V$.
Let $g\in \Sym^2_\C(V)$ be an $SL(2)$-invariant, non-degenerate
quadratic form on $V$, and let $\{I,J,K\}$ be a quaternionic basis in
$\Mat(2)$, i.e. $\{\IdId_V,I,J,K\}$ is a basis for $\Mat(2)$ and $I^2=J^2=K^2=IJK=-1$. Then 
\[
g(x, Iy) = g(Ix, I^2y) = - g(Ix, y)
\]
hence the form $\Omega_I(\cdot, \cdot):= g(\cdot, I\cdot)$
is a symplectic form, obviously non-degenerate; similarly, the forms
$\Omega_J(\cdot, \cdot):= g(\cdot, J\cdot)$ and $\Omega_K(\cdot, \cdot):= g(\cdot, K\cdot)$
have the same properties. It turns out that this construction gives a trisymplectic
structure, and all trisymplectic structures can be obtained in this way.

\Theorem\label{_quadra_from_trisy_Theorem_}
Let $V$ be a vector space equipped with a standard action of
the matrix algebra $\Mat(2)\stackrel\rho\arrow \End(V)$, and
$\{I,J,K\}$ a quaternionic basis in
$\Mat(2)$. Consider the corresponding action of $SL(2)$
on the tensor powers of $V$. Then
\begin{description}
\item[(i)] Given a non-degenerate, $SL(2)$-invariant quadratic
form $g\in \Sym^2(V)$, consider the space $\bOmega\subset\Lambda^2 V$
generated by the symplectic forms $\Omega_I, \Omega_J, \Omega_K$
defined as above,
\begin{equation}\label{_quadra_indu_trise_Equation_}
\Omega_I(\cdot, \cdot):= g(\cdot, I\cdot),\ \ 
\Omega_J(\cdot, \cdot):= g(\cdot, J\cdot),\ \ 
\Omega_K(\cdot, \cdot):= g(\cdot, K\cdot).
\end{equation}
Then $\bOmega$ is a trisymplectic structure on $V$, with
the operators $\Omega_K^{-1}\circ \Omega_J$ and $\Omega_K^{-1} \circ \Omega_I$,  generating the algebra
$H \cong \Mat(2) := \im(\rho)\subset \End(V)$ as in 
Proposition \ref{_trisymple_Cliff_Proposition_}.
\item[(ii)] Conversely, for each trisymplectic structure
$\bOmega$ inducing the action of $H\cong \Mat(2)$ on $V$ given by $\rho$, there exists a unique
(up to a constant) $SL(2)$-invariant non-degenerate 
quadratic form $g$ inducing $\bOmega$ as in 
\eqref{_quadra_indu_trise_Equation_}.
\end{description}
\et

{\bf Proof:} First, consider the 3-dimensional subspace of $\Lambda^2V$ generated by 
$\Omega_I, \Omega_J, \Omega_K$. Regard $\Omega_I$ as an operator from $V$ to $V^*$, $x\mapsto \Omega_I(x,\cdot)$, and similarly for
$\Omega_J$ and $\Omega_K$; let $h:= \Omega_K^{-1} \circ \Omega_J \in \End(V)$. Then
$$ h(x) = \Omega_K^{-1}(-g(Jx,\cdot)) = -KJx = Ix , $$
hence $h=I$. Similarly, one concludes that $\Omega_K^{-1} \circ \Omega_I = J$, hence $\Omega_K^{-1} \circ \Omega_J$ and 
$\Omega_K^{-1} \circ \Omega_I$ generate $H$ as an algebra.

To complete the proof of the first claim of the Theorem,
it remains for us to show that $\bOmega$ is a trisymplectic structure.
For this it would suffice to show that any non-zero, degenerate
form $\Omega\in \bOmega$ has rank $\frac 1 2 \dim V$. 
Consider $V$ as a tensor product $V= V_0 \otimes \C^2$,
with $\Mat(2)$ acting on the second factor. 
Choose a basis $\{x,y\}$ in $\C^2$, so that
$V= V_0 \otimes x\oplus V_0 \otimes y$.
{}From $SL(2)$-invariance it is clear that
$g(v_0\otimes \zeta)= g(v_0\otimes \xi)$
for any non-zero $\zeta,\xi \in \C^2$.
Therefore, $V_0 \otimes x\subset V$
and $V_0 \otimes y\subset V$ are isotropic subspaces,
dual to each other. Denote by $\Omega_{V_0}$ the
corresponding bilinear form on $V_0$:
\[ \Omega_{V_0}(v,v'):=g(v\otimes x, v'\otimes y).\]
Since the group $SL(2)$ acts transitively on
the set of all $\zeta,\xi \in \C^2$ satisfying
$\zeta\wedge \xi = x\wedge y$, we obtain
\[
\Omega_{V_0}(v,v')=g(v\otimes x, v'\otimes y)= -g(v\otimes y, v'\otimes x)=
-\Omega_{V_0}(v', v).
\]
Therefore, $\Omega_{V_0}$ is skew-symmetric. Conversely, 
$g$ can be expressed through $\Omega_{V_0}$, as follows.
Given $x',y'\in\C^2$ such that $x'\wedge y'\ne0$, and $v\otimes x_1, w\otimes y_1\in V$, we find 
$h\in SL(2)$ such that $h(x')=\lambda x$ and $h(y')=\lambda y$ with $\lambda= \frac{x_1\wedge y_1}{x\wedge y}$.
Since $g$ is $SL(2)$-invariant, one has 
\[ g(v\otimes x', w\otimes y')=\lambda^2 g(v\otimes x, w\otimes y).
\]
Therefore, for appropriate symplectic form $\Omega_{\C^2}$
on $\C^2$, one would have
\begin{equation}\label{_g_via_Omega_v_0_Equation_}
g(v\otimes x', w\otimes y')= \Omega_{V_0}(v,w) \cdot \Omega_{\C^2}(x', y').
\end{equation}

This gives us a description of the group $\St(H,g)\subset \End(V)$ which fixes 
the algebra $H\subset \End(V)$ and $g$. Indeed, from
\eqref{_g_via_Omega_v_0_Equation_},
we obtain that $\St(H,g)\cong \Sp(V_0, \Omega_{V_0})$
acting on $V=V_0\otimes \C^2$ in a standard way, i.e. trivially
on the second factor.


Since all elements of $\bOmega$ are by construction fixed 
by $\St(H,g)\cong\Sp(V_0, \Omega_{V_0})$, for any $\Omega\in \bOmega$,
the annihilator of $\Omega$ is $\Sp(V_0, \Omega_{V_0})$-invariant.
However, $V\cong V_0\oplus V_0$ is isomorphic to a sum of two copies of
the fundamental representation of $\Sp(V_0, \Omega_{V_0})$,
hence any $\Sp(V_0, \Omega_{V_0})$-invariant space has
dimension $0, \frac 1 2 \dim V$, or $\dim V$. We finished the
proof of Theorem \ref{_quadra_from_trisy_Theorem_} (i).

\hfill

The proof of the second part of the Theorem is divided into several steps.

{\bf Step 1.}
Let $I\in\Mat(2)$ be such that $I^2=-\IdId_V$.
Consider  the action 
$\rho_I:\; U(1) \arrow \End(V)$ generated by 
$t \arrow \cos t \IdId_V + \sin t \rho(I).$
As shown in Remark \ref{_Lie_H_action_Remark_}, 
$\bOmega$ is an $SL(2)$-subrepresentation of $\Lambda^2V$.
This representation is by construction irreducible.
Since it is 3-dimensional, it is isomorphic to the
adjoint representation of $SL(2)$; let $\phi:\; \goth{sl}(2)\arrow \bOmega$ be an isomorphism.
Therefore, there exists a 2-form $\Omega_I\in \bOmega$
fixed by the action of $\rho_I$, necessarily unique up
to a constant multiplier.
Write $g_I(x,y):= -\Omega_I(x, Iy)$. Then
\[
g_I(y,x) = \Omega_I(y, Ix)=-\Omega_I(Ix,y) = -\Omega_I(I^2x,Iy) = 
\Omega(x,Iy)=g_I(x,y),
\]
hence $g_I$ is symmetric, i.e. $g_I\in \Sym^2_\C(V)$.

\hfill

{\bf Step 2.} Now let $\{I,J,K\}$ be the quaternionic basis for $\Mat(2)$.
We prove that the symmetric tensor $g_I$ constructed in Step 1 is fixed by the subgroup
$\{ \pm 1, \pm I, \pm J, \pm K\}\subset SL(2)\subset \Mat(2)$,
for an appropriate choice of $\Omega_I\in \bOmega$.

Using the $SL(2)$-invariant isomorphism $\phi:\; \goth{sl}(2)\arrow \bOmega$ constructed in Step 1,
and the identification of $\goth{sl}(2)$ with the subspace of $\Mat(2)$ generated by 
$I$, $J$ and $K$, we fix a choice of $\Omega_I$
by requiring that $\phi(I)=\Omega_I$. 
Then, $J$ and $K$, considered as elements
of $SL(2)$, act on $\Omega_I$ by $-1$:
\[
\Omega_I(J\cdot, J\cdot) = - \Omega_I(\cdot,\cdot),\ \ 
\Omega_I(K\cdot, K\cdot) = - \Omega_I(\cdot,\cdot).
\]
This gives 
\[
g_I(J\cdot, J\cdot) = \Omega_I(J\cdot, IJ\cdot)=-
\Omega_I(J\cdot, JI\cdot)=\Omega_I(\cdot,I\cdot) =g(\cdot,\cdot).
\]
We have shown that $J$, considered as an element
of $SL(2)$, fixes $g_I$. The same argument applied to $K$
implies that $K$ also fixes $g_I$. We have shown that
$g_I$ is fixed by the Klein subgroup
${\goth K}:=\{ \pm 1, \pm I, \pm J, \pm K\}\subset SL(2)$.

\hfill

{\bf Step 3.} We prove that $g_I$ is $SL(2)$-invariant.

Consider $\Sym^2 V$ as a representation of $SL(2)$. Since
$V$ is a direct sum of weight 1 representations, 
Clebsch-Gordon theorem implies that $\Sym^2 V$
is a sum of several weight 2 and trivial representations.
However, no element on a weight 2 representation
can be ${\goth K}$-invariant. Indeed, a weight 2 representation
$W_2$ is isomorphic to an adjoint representation, that is,
a complex vector space generated by the imaginary
quaternions: $W_2:=\langle I, J, K\rangle\subset \Mat(2)$ 
Clearly, no non-zero linear combination of $I,J,K$. 
can be ${\goth K}$-invariant. Since $g_I$ is ${\goth K}$-invariant,
this implies that $g_I$ lies in the $SL(2)$-invariant part of
$\Sym^2_\C V$.

\hfill

{\bf Step 4.} We prove that $g_I$ is proportional to $g_{I'}$, 
for any choice of quaternionic triple $I',J',K'\in \Mat(2)$.
The ambiguity here is due to the
ambiguity of a choice of $\Omega_I$ in a centralizer of
$\rho_I$. The form $\Omega_I$ is defined up to
a constant multiplier, because this centralizer is $1$-dimensional.

The group $SL(2)$ acts transitively on the set of quaternionic
triples. Consider $h\in SL(2)$ which maps $I,J,K$ to 
$I',J',K'\in \Mat(2)$. Then $h(g_I)$ is proportional
to $g_{I'}$.

{\bf Step 5:} To finish the proof of 
Theorem \ref{_quadra_from_trisy_Theorem_} (ii), it 
remains to show that the $SL(2)$-invariant quadratic form $g$
defining the trisymplectic structure $\bOmega$ is unique, up
to a constant.  Indeed, let $g$ be such a form; then
$g=\Omega(\cdot, I\cdot)$, for some $\Omega\in \bOmega$ and 
$I\in \Mat(2)$, satisfying $I^2=-1$. 
Since $g$ is $SL(2)$-invariant, the form
$\Omega$ is $\rho_I$-invariant, hence 
$g$ is proportional to the form $g_I$
constructed above. We proved Theorem \ref{_quadra_from_trisy_Theorem_}.
\endproof


\section{$SL(2)$-webs and trisymplectic structures}


In this section we introduce the notion of trisymplectic structures on manifolds,
study its reduction to quotients, and explain how they are related to holomorphic $SL(2)$-webs. 

The trisymplectic structures and trisymplectic reduction were previously considered in a
context of framed instanton bundles by Hauzer and Langer
(\cite[Sections 7.1 and 7.2]{_Hauzer_Langer_}). However, their approach
is significantly different from ours, because
they do not consider the associated $SL(2)$-web structures.

\subsection{Trisymplectic structures on manifolds}

\Definition\label{_trisymple_mani_Definition_}
{\bf A (weakly) trisymplectic structure} on an even dimensional complex
manifold $M$ is a 3-dimensional space $\bOmega\subset \Omega^2M$
of closed holomorphic $2$-forms 
such that at any $x\in M$, the evaluation
$\bOmega(x)$ gives a (weakly) trisymplectic
structure on the tangent space $T_x M$.
A complex manifold equipped with
a (weakly) trisymplectic structure is called a
{\bf (weakly) trisymplectic manifold}.
\ed


Clearly, trisymplectic manifolds must have even complex dimension. Notice also
that Theorem \ref{_quadra_from_trisy_Theorem_} implies the equivalence
between the Definition above and Definition \ref{_trisy_intro_Definition_}.

A similar notion is called a \emph{hypersymplectic structure}
by Hauzer and Langer in \cite[Definition 7.1]{_Hauzer_Langer_}.
A complex manifold $(X,g)$ equipped with a non-degenerate holomorphic
symmetric form 
is called {\bf hypersymplectic} in \cite{_Hauzer_Langer_} if there are
three complex structures $I$, $J$ and $K$ satisfying quaternionic relations and
$g(Iv, Iw) = g(Jv, Jw) = g(Kv,Kw) = g(v,w)$. Clearly, one can then define three
nondegerate symplectic forms $\omega_1(v,w) = g(Iv,w)$, $\omega_2(v,w) = g(Jv,w)$
and $\omega_3(v,w) = g(Kv,w)$ which generate a $3$-dimensional subspace of holomorphic $2$-forms $\bOmega\subset \Omega^{2,0}X$. If every nonzero, degenerate linear combination of $\omega_1$, $\omega_2$ and $\omega_3$ has rank $\dim X/2$, then $(X,\bOmega)$ is a weakly trisymplectic manifold. Furthermore, if in addition 
every degenerate form in $\Omega$ belong to a non-degenerate quadric
hypersurface in $\Omega^{2,0}X$, then $(X,\bOmega)$ is a trisymplectic manifold.
 
We prefer, however, to use the term ``trisymplectic"'
to avoid confusion with the hypersymplectic structures
known in differential geometry (see \cite{_Andrada_Dotti_,_Dancer_Swann:hypersy_}), 
where a hypersymplectic structure is a $3$-dimensional space $W$ 
of differential $2$-forms on a real manifold which satisfy the 
same rank assumptions as in Definition \ref{_trisymple_mani_Definition_}, and, in addition,
contain a non-trivial degenerate $2$-form 
(for complex-linear 2-forms, this last assumption
is automatic).

\Definition
Let $\eta$ be a $(p,0)$-form on a complex manifold $M$.
The set 
$$ \Null_\eta =\{ v\in T^{1,0}M ~|~ \eta\cntrct v=0 \}, $$
where $\cntrct$ denotes the contraction, is called {\bf the 
null-space}, or {\bf an annihilator}, of $\eta$.
\ed

\Lemma\label{_null_space_invo_Lemma_}
Let $\eta$ be a closed $(p,0)$-form for which $\Null_\eta$
is a sub-bundle in $T^{1,0}(M)$. Then $\Null_\eta$
is holomorphic and involutive, that is, satisfies
\[ [\Null_\eta, \Null_\eta]\subset \Null_\eta.\]
\el

{\bf Proof:} The form $\eta$ is closed and hence holomorphic, therefore
$\Null_\eta$ is a holomorphic bundle. To prove that
$\Null_\eta$ is involutive, we use the Cartan's formula,
expressing de Rham differential in terms of commutators
and Lie derivatives. Let $X\in T^{1,0}(M)$, $Y, Z\in \Null_\eta$.
Then Cartan's formula gives $0 = d\eta(X,Y,Z) = \eta(X, [Y,Z])$.
This implies that $[Y,Z]$ lies in $\Null_\eta$.
\endproof

\hfill

Lemma \ref{_null_space_invo_Lemma_} can be used to construct
holomorphic $SL(2)$-webs on manifolds, as follows.

\Theorem \label{_trisy_web_Theorem_}
Let $M$ be an even dimensional complex manifold, and let $\bOmega\subset \Omega^{2,0}(M)$ be a trisymplectic structure on $M$.
Then there is a holomorphic $SL(2)$-web  $(M, S_t)$, $t\in \C \p1$ 
on $M$ such that each sub-bundle $S_t$ is a null-space of
a certain $\Omega_t\in \bOmega$.
\et

{\bf Proof:}
Theorem \ref{_trisy_web_Theorem_}
follows immediately from Proposition 
\ref{_trisymple_Cliff_Proposition_}, Claim \ref{_Mat_2_trisy_Claim_},
and Lemma \ref{_null_space_invo_Lemma_}. Indeed, at any point
$x\in M$, the 3-dimensional space $\bOmega(x) \in \Lambda^{2,0}(T_xM)$
satisfies assumptions of Claim \ref{_Mat_2_trisy_Claim_},
hence it induces an action of the matrix algebra $H \cong \Mat(2)$
on $T_xM$. Denote by $Z\subset \bOmega$ the set of degenerate forms.
{}From Remark \ref{_Lie_H_action_Remark_}
we obtain that the projectivization ${\mathbb P}Z\subset {\mathbb P}\bOmega$
is a non-singular quadric, isomorphic to $\C \p1$.
For each $t\in \Z$, the corresponding zero-space
$S_t \subset TM$ is a sub-bundle of dimension $\frac 1 2 \dim M$,
and for distinct $t$, the bundles $S_t$ are obviously transversal.
Also, Lemma \ref{_null_space_invo_Lemma_} implies that
the bundles $S_t$ are involutive. Finally, the projection
operators associated to $S_t, S_t'$ generate a subalgebra
isomorphic to $\Mat(2)$, as follows from 
Claim \ref{_Mat_2_trisy_Claim_}.
We have shown that $S_t, t\in {\mathbb P}Z\cong \C \p1$
is indeed a holomorphic $SL(2)$-web.
\endproof

In particular, every trisymplectic 
manifold has an induced holomorphic $SL(2)$-web. 

\Definition
Let $(M, S_t)$, $t\in \C \p1$, be a complex manifold equipped
with a holomorphic $SL(2)$-web. Assume that there 
is a trisymplectic structure $\bOmega\subset \Omega^{2,0}(M)$
such that for each $t\in \C \p1$ there exists $\Omega_t\in \bOmega$
satisfying $S_t= \Null_{\Omega_t}$.
Then $\bOmega$ is called {\bf a trisymplectic structure
generating the $SL(2)$-web $S_t, t\in \C \p1$}. 
\ed

\subsection[Chern connection on $SL(2)$-webs 
and trisymplectic structures]{Chern connection on $SL(2)$-webs \\
and trisymplectic structures}

The following theorem is proven in the same way as one
proves that the K\"ahler forms on a hyperk\"ahler manifold
are preserved by the Obata connection. Indeed,
a trisymplectic structure is a complexification of 
a hyperk\"ahler structure, and the Chern connection
corresponds to a complexification of the Obata
connection on a hyperk\"ahler manifold.

\Theorem\label{_trisy_Chern_Theorem_}
Let $\bOmega$ be a trisymplectic structure 
generating a holomorphic $SL(2)$-web on a complex
manifold $M$. Denote by $\nabla$ the corresponding
Chern connection. Then $\nabla\Omega=0$, for
each $\Omega\in \bOmega$.
\et

{\bf Proof:} Let $(M, \bOmega)$ be a trisymplectic manifold,
and \[ \rho:\; \goth{sl}(2) \arrow \End(\Lambda^*M)\]
the corresponding multiplicative action of $\goth{sl}(2)$ associated
to the Lie algebra $\g\cong \goth{sl}(2)\subset \End(TM)$,  
$\g=[H,H]$, where the algebra $H=\Mat(2)$ is 
constructed in Claim \ref{_Mat_2_trisy_Claim_}.
By Remark \ref{_Lie_H_action_Remark_},
$\bOmega$ is an irreducible $\goth{sl}(2)$-module.
Choose a Cartan subalgebra in $\goth{sl}(2)$, and let
$\Omega^i(M)= \bigoplus_{p+q=i} \Omega^{p,q}(M)$
be the multiplicative weight decomposition associated with this Cartan subalgebra, with
$\Omega^i(M) := \Lambda^{i,0}(M)$. We write the
corresponding weight decomposition of $\bOmega$ as 
\[ \bOmega = \bOmega^{2,0} \oplus \bOmega^{1,1}\oplus \bOmega^{0,2}.
\]
Clearly 
\begin{equation}\label{_weight_multi_forms_Equation_}
\Omega^i(M)= \bigoplus_{p+q=i}\Omega^{p,0}(M)\otimes \Omega^{0,q}(M),
\end{equation}
since $\Omega^{p,0}(M)\otimes \Omega^{0,q}(M)=\Omega^{p,q}(M)$.

Consider the Chern connection as an operator
\[ \Omega^i(M)\stackrel \nabla\arrow \Omega^i(M) \otimes\Omega^1(M)
\]
(this makes sense, because $\nabla$ is a holomorphic connection),
and let 
\[ \Omega^{p,q}(M)\stackrel {\nabla^{1,0}}\arrow \Omega^{p,q}(M) 
   \otimes\Omega^{1,0}(M), \ \ 
\Omega^{p,q}(M)\stackrel {\nabla^{0,1}}\arrow \Omega^{p,q}(M) 
   \otimes\Omega^{0,1}(M)
\]
be its weight components. Since $\nabla$ is torsion-free, one has
\begin{equation}\label{_Alt_Chern_6_Equation_}
\6\eta= \Alt(\nabla\eta),
\end{equation}
where $\6$ is the holomorphic de Rham differential,
and 
\[ \Alt:\; \Omega^i(M) \otimes\Omega^1(M)\arrow
\Omega^{i+1}(M)
\]
the exterior multiplication. Denote by $\Omega_{0,2}$,
$\Omega_{2,0}$ generators of the 1-dimensional spaces
$\bOmega^{2,0}, \bOmega^{0,2}\subset \bOmega$. 
Since $\6\Omega_{2,0}=0$, and the multiplication map
$\Omega^{0,1}(M)\otimes \Omega^{2,0}(M) \arrow \Omega^3 (M)$
is injective by \eqref{_weight_multi_forms_Equation_},
\eqref{_Alt_Chern_6_Equation_} implies that $\nabla^{0,1}(\Omega_{2,0})=0$.
Similarly, $\nabla^{1,0}(\Omega_{0,1})=0$. However, since
$\bOmega$ is irreducible as a representation of $\goth{sl}(2)$,
there exist an expression of form $\Omega_{2,0}= g(\Omega_{0,2})$,
where $g\in U_\g$ is a polynomial in $\g$.
Since the Chern connection $\nabla$ commutes with $g$, 
this implies that
\[
0 = g(\nabla^{1,0}\Omega_{0,2})= \nabla^{1,0}(g \Omega_{0,2})= 
\nabla^{1,0}\Omega_{2,0}.
\]
We have proved that both weight components of $\nabla\Omega_{2,0}$
vanish, thus $\nabla\Omega_{2,0}=0$. Acting on $\Omega_{2,0}$
by $\goth{sl}(2)$ again, we obtain that $\nabla\Omega=0$
for all $\Omega\in \bOmega$.
\endproof

\subsection{Trisymplectic reduction}
\label{_trisymple_red_Subsection_}

\Definition\label{_trisymple_moment_Definition_}
Let $G$ be a compact Lie group acting on a complex
manifold equipped with a trisymplectic structure $\bOmega$
generating an $SL(2)$-web. Assume that $G$ preserves
$\bOmega$. {\bf A trisymplectic moment map}
$\bMu_\C:\; M \arrow \g^* \otimes_\R \bOmega^*$
takes vectors $\Omega\in \bOmega, g\in \g=\Lie(G)$
and maps them to a holomorphic function
$f\in \calo_M$, such that $df=\Omega\cntrct g$,
where $\Omega\cntrct g$ denotes the contraction
of $\Omega$ and the vector field $g$. A moment
map is called {\bf equivariant} if it is equivariant
with respect to the coadjoint action of $G$ on $\g^*$
Further on, we shall always assume that all moment
maps we consider are equivariant. 
\ed

\hfill

Since $d\Omega=0$, and $\Lie_g\Omega=0$, Cartan's formula gives
$0=\Lie_g(\Omega) = d(\Omega\cntrct g)$, hence the contraction
$\Omega\cntrct g$ is closed. Therefore, existence of a moment
map is equivalent to exactness of this closed 1-form for each
$\Omega\in \bOmega, g\in \g=\Lie(G)$. Therefore,
the existence of a moment map is assured whenever $M$ is simply
connected. The existence of an {\em equivariant} moment map is less
immediate, and depends on certain cohomological properties
of $G$ (see e.g. \cite{_HKLR_}). 

\Definition 
Let $(M, \bOmega)$ be a trisymplectic manifold.
Assume that $M$ is equipped with an action
of a compact Lie group $G$ preserving $\bOmega$, and an equivariant 
trisymplectic moment map \[ \bMu_\C:\; M \arrow \g^* \otimes_\R \bOmega^*.\]
Consider a $G$-invariant vector $c\in \g^* \otimes_\R \bOmega^*$
(usually, one sets $c=0$), and let $\bMu_\C^{-1}(c)$
be the corresponding {\bf level set} of the moment map.
Consider the action of the corresponding complex Lie
group $G_\C$ on $\bMu_\C^{-1}(c)$, obtained as a
complexification. Assume that $\bMu_\C^{-1}(c)$ is smooth,
and that the action of $G_\C$ on $\bMu_\C^{-1}(c)$ is free and proper, 
so that the quotient $\bMu_\C^{-1}(c)/G_\C$ is Hausdorff 
and smooth.\footnote{In this case $\bMu_\C^{-1}(c)/G_\C$
is a complex manifold: see e.g. \cite[Example 2.1.12]{_Huybrechts:complex_}.}
Then the quotient $\bMu_\C^{-1}(c)/G_\C$ is called 
{\bf the trisymplectic quotient} of $(M,\bOmega)$,
denoted by $M\4 G$.
\ed

As we shall see, the trisymplectic quotient is related
to the usual hyperk\"ahler quotient in the same way
as the hyperk\"ahler quotient (denoted by $\3$) is related to the
symplectic quotient, denoted by $\2$. In heuristic terms,
the hyperk\"ahler quotient can be considered as a 
``complexification'' of a symplectic quotient;
similarly, the trisymplectic quotient is a 
``complexification'' of a hyperk\"ahler quotient.

The non-degeneracy condition of Theorem
\ref{_trisy_quotient_trisy_Theorem_}
below is necessary for the trisymplectic reduction process, in the
same way as one would need some non-degeneracy if one tries
to perform the symplectic reduction on a pseudo-K\"ahler manifold.
On a K\"ahler (or a hyperk\"ahler) manifold it is automatic
because the metric is positive definite, but otherwise 
it is easy to obtain counterexamples (even in the simplest
cases, such as $S^1$-action on $\C^2$ with an appropriate
pseudo-K\"ahler metric).

\Theorem\label{_trisy_quotient_trisy_Theorem_}
Let $(M, \bOmega)$ be a trisymplectic manifold.
Assume that $M$ is equipped with an action
of a compact Lie group $G$ preserving $\bOmega$ and a trisymplectic
moment map $\bMu_\C:\; M \arrow \g^* \otimes_\R \bOmega^*$.
Assume, moreover, that the image of $\g=\Lie(G)$ in
$TM$ is non-degenerate at any point (in the sense of
Definition \ref{_non_dege_trisy_Definition_}).
Suppose that the quotient $\bMu_\C^{-1}(c)/G_\C$ is Hausdorff.
Then the trisymplectic quotient $M\4 G$ defined
as $M\4 G:=\bMu_\C^{-1}(0)/G_\C$ 
is naturally equipped with a trisymplectic structure.
\et

For a real version of this theorem, please see 
\cite{_Dancer_Swann:hypersy_}.

The proof of Theorem \ref{_trisy_quotient_trisy_Theorem_}
takes the rest of this section. First, we shall use the
following definition and observation.

\Definition
Let $B\subset TM$ be an involutive sub-bundle in a tangent
bundle to a smooth manifold $M$. A form $\eta\in \Omega^iM$
is called {\bf basic with respect to $B$} if for
any $X\in B$, one has $\eta \cntrct X=0$ and $\Lie_X\eta=0$.
\ed

The following claim is clear.

\Claim\label{_basic_pullback_Claim_}
Let $B\subset TM$ be an involutive sub-bundle in a tangent
bundle to a smooth manifold $M$. Consider the projection
$M\stackrel\pi\arrow M'$ onto its leaf space, which
is assumed to be Hausdorff. Let $\eta\in \Omega^iM$
be a basic form on $M$. Then $\eta=\pi^*\eta'$, for
an appropriate form $\eta'$ on $M'$. \endproof
\ec

Return to the proof of Theorem \ref{_trisy_quotient_trisy_Theorem_}.
Let $I,J,K$ be a quaternionic basis in $\Mat(2)$,
$\Omega_I\in \bOmega$ a $\rho_I$-invariant form chosen
as in Theorem \ref{_quadra_from_trisy_Theorem_} (ii),
and $g:= \Omega_I(\cdot, I\cdot)$ the corresponding
non-degenerate, complex linear symmetric form on $M$. 
By its construction, $g$ is holomorphic, and by
Theorem \ref{_quadra_from_trisy_Theorem_} (ii), $SL(2)$-invariant.
Let $N\subset M$ be a level set of the moment map
$N:=\bMu_\C^{-1}(c)$. Choose a point $m\in N$, and let
$\g_m\subset T_mM$ be the image of $\g=\Lie G$
in $T_mM$. Then, for each $v\in \g_m$, one has
\begin{equation}\label{_d_mu_I_Equation_}
d\mu_I(v, \cdot)= \Omega_I(v, \cdot),
\end{equation}
where $\mu_I:M\to\g^*$ is the holomorphic moment map associated with the
symplectic form $\Omega_I$.

On the other hand, $\Omega_I(v, \cdot)= -g(Iv, \cdot)$. 
Therefore, $T_m N\subset T_mM$ is an orthogonal 
complement (with respect to $g$) to the space
$\langle I\g_m, J\g_m, K\g_m\rangle$ generated by
$I(\g_m), J(\g_m), K(\g_m)$:
\begin{equation}\label{_TN_as_comple_Equation_}
T_mN =\langle I\g_m, J\g_m, K\g_m\rangle^\bot_g.
\end{equation}
By \eqref{_d_mu_I_Equation_}, for any $v\in \g_m$,
and $w\in T_mN$, one has $\Omega_I(v, w)=0$.
Also, $G$ preserves all forms from $\bOmega$,
hence $\Lie_v\Omega_i=0$. Therefore, $\Omega_I$
is basic with respect to the distribution
$V \subset TN$ generated by the image of 
Lie algebra $\g\arrow TN$.

Consider the quotient map $N\stackrel \pi \arrow N/G_\C=M'$.
To prove that $M'$ is a trisymplectic manifold, 
we use Claim \ref{_basic_pullback_Claim_},
obtaining a 3-dimensional space of holomorphic 2-forms
$\bOmega'\subset \Lambda^{2,0}(M')$, with
$\bOmega\restrict N= \pi^*\bOmega'$.
To check that $\bOmega'$ is a trisymplectic structure,
it remains only to establish the rank conditions.

Let $W\subset T_mN$ be a subspace complementary
to $\g_m \subset T_mN$. Clearly, for any
$\Omega \in \bOmega$, the rank of the
corresponding form $\Omega'\in \bOmega'$ at
the point $m'=\pi(m)$ is equal to 
the rank of $\Omega\restrict W$.

Let $W_1\subset T_mM$ be a subspace obtained as
$H\cdot \g_m$, where $H\cong \Mat(2)\subset \End(T_mM)$ is
the standard action of the matrix algebra defined as in
Subsection \ref{_Mat_2_Subsection_}.
By the non-degeneracy assumption 
of Theorem \ref{_trisy_quotient_trisy_Theorem_},
the restriction $g\restrict{W_1}$ is non-degenerate,
hence the orthogonal complement
$W_1^\bot$ satisfies $T_mM = W_1 \oplus W_1^\bot$.
{}From \eqref{_TN_as_comple_Equation_} we obtain
$W_1^\bot \subset T_mN$, with $W_1^\bot \oplus \g_m= T_mN$.
Therefore, $W:=W_1^\bot$ is complementary to 
$\g_m$ in $T_mN$. The space $(W, \bOmega\restrict W)$
is trisymplectic, as follows from Claim 
\ref{_trisymple_subspa_Claim_}.
Therefore, the forms $\bOmega'\subset \Lambda^{2,0}(M')$
define a trisymplectic structure on $M'$.
We have proved Theorem \ref{_trisy_quotient_trisy_Theorem_}.
\endproof


\section{Trihyperk\"ahler reduction}


\subsection{Hyperk\"ahler reduction}

Let us start by recalling some well known definitions.

\Definition
Let $G$ be a compact Lie group acting on a hyperk\"ahler
manifold $M$ by hyperk\"ahler isometries. {\bf A 
hyperk\"ahler moment map} is a smooth map $\mu: M\to\g^*\otimes\R^3$ such that:
\begin{description}
\item[(1)] $\mu$ is $G$-equivariant, i.e. $\mu(g\cdot m)={\rm Ad}_{g^{-1}}^*\mu(m)$;
\item[(2)] $\langle d\mu_i(v),\xi \rangle = \omega_i(\xi^*,v)$, for every $v\in TM$, $\xi\in\g$ and $i=1,2,3$,
where $\mu_i$ denotes one of the $3$ components of $\mu$, $\omega_i$ is one the the K\"ahler forms associated with the hyperk\"ahler structure, and $\xi^*$ is the vector field generated by $\xi$.
\end{description}
\ed

\Definition
Let $\xi_i\in\g^*$ ($i=1,2,3$) be such that ${\rm Ad}_{g}^*\xi_i=\xi_i$, so that $G$ acts on $\mu^{-1}(\xi_1,\xi_2,\xi_3)$; suppose that this action is free.
The quotient manifold $M\3 G :=
\mu^{-1}(\xi_1,\xi_2,\xi_3)/G$ is called the {\bf
  hyperk\"ahler quotient} of $M$.
\ed

\Theorem
Let $M$ be a hyperk\"ahler manifold, and $G$ a compact
Lie group acting on $M$ by hyperk\"ahler automorphisms,
and admitting a hyperk\"ahler moment map.
Then the hyperk\"ahler quotient $M\3 G$ is 
equipped with a natural hyperk\"ahler structure. 
\et
{\bf Proof:} See \cite{_HKLR_}, \cite[Theorem 3.35]{N} \endproof

\subsection{Trisymplectic reduction on the space of twistor sections}


Let $M$ be a hyperk\"ahler manifold, 
$L\in \C \p1$ an induced complex structure
and $\ev_L:\; \Sec(M)\arrow (M,L)$
the corresponding evaluation map, 
mapping a section $s:\; \C \p1\arrow \Tw(M)$
to $s(L) \in (M,L) \subset \Tw(M)$.
Consider the holomorphic form
$\Omega_L\in \Omega^{2,0}(M,L)$ 
constructed from a hyperk\"ahler structure as in 
\eqref{_holo_symple_on_hk_Equation_}.
Denote by $\bOmega$ the space of holomorphic
forms on $\Sec(M)$ generated by $\ev_L^*(\Omega_L)$
for all $L\in \C \p1$.

\Claim\label{_trisy_on_Sec_Claim_}
$\bOmega$ is a trisymplectic structure on the space $\Sec_0(M)$
of regular twistor sections. It generates the standard holomorphic
$SL(2)$-web, constructed in Proposition \ref{web_on_Sec}.
\ec

{\bf Proof:} Consider the bundle $\calo(2)$ on $\C \p1$,
and let $\pi^* \calo(2)$ be its lift to the twistor
space $\Tw(M)\stackrel \pi \arrow \C \p1$.
Denote by $\Omega_{\pi}^2 \Tw(M)$ the sheaf
of fiberwise 2-forms on $\Tw(M)$. 
The bundle $\Omega_{\pi}^2 \Tw(M)$ can be obtained
as a quotient 
\[ 
 \Omega_{\pi}^2 \Tw(M):= 
\frac{\Omega^2 \Tw(M)}{\pi^*\Omega^1 \C \p1\wedge \Omega^1 \Tw(M)}.
\]
It is well known (see e.g. \cite{_HKLR_}), 
that the fiberwise symplectic structure depends
on $t\in \C \p1$ holomorphically, and, moreover,
$\Tw(M)$ is equipped with a holomorphic 2-form 
$\Omega_{tw}\in \pi^*\calo(2)\otimes \Omega_{\pi}^2 \Tw(M)$
inducing the usual holomorphic symplectic forms on the fibers, see 
\cite[Theorem 3.3(iii)]{_HKLR_}.

Given $S\in \Sec(M)$, the tangent space 
$T_s\Sec(M)$ is identified with the space of
global sections of a bundle $T_{\pi}\Tw(M)\restrict S$.
Therefore, any vertical 2-form 
$\Omega_1 \in \Omega_{\pi}^2 \Tw(M)\otimes \pi^*\calo(i)$
defines a holomorphic 2-form on $\Sec_0(M)$ with values in
the space of global sections $\Gamma(\C \p1, \calo(i))$.

Denote by $A$ the space $\Gamma(\C \p1, \calo(2))$.
A fiberwise holomorphic $\calo(2)$-valued 2-form 
gives a 2-form on $NS$, for each $S\in \Sec(M)$,
with values in $A$. Therefore,
for each $\alpha \in A^*$, one obtains a 2-form
$\Omega_{tw}(\alpha)$ on $\Sec(M)$ as explained above.
Let $\bOmega$ be a 3-dimensional space
generated by $\Omega_{tw}(\alpha)$ for all $\alpha\in A^*$.

Consider a map $\epsilon_L:\; A \arrow \calo(2)\restrict L\cong \C$
evaluating $\gamma\in \Gamma(\calo(2))$ at a point $L\in \C \p1$.
By definition, the 2-form $\Omega_{tw}(\epsilon_L)$ is 
proportional to $\ev^*_L\Omega_L$. Therefore,
$\bOmega$ contains  $\ev^*_L\Omega_L$ for all
$L\in \C \p1$.

Counting parameters, we obtain that any element 
$x\in A^*$ is a sum of two evaluation maps: 
$x=a\epsilon_{L_1}+ b \epsilon_{L_2}$. When
$a,b\neq 0$ and $L_1, L_2$ are distinct, the
corresponding 2-form $a\ev^*_{L_1}\Omega_{L_1} + b\ev^*_{L_2}\Omega_{L_2}$
is clearly non-degenerate on $\Sec_0(M)$.
Indeed, the map 
\[ \Sec_0(M)\xlongrightarrow{\ev_{L_1}\times\ev_{L_2}}
   (M,L_1) \times (M,L_2)
\]
is etale, and any linear combination $a\Omega_{L_1}+b\Omega_{L_2}$
with non-zero $a,b$ is nondegenerate on $(M,L_1) \times (M,L_2)$.
When either $a$ or $b$ vanish, the corresponding form
(if non-zero) is proportional to $\ev^*_{L_i}\Omega_{L_i}$,
hence its rank is $\dim M=\frac 1 2 \dim \Sec(M)$.

Finally, note that the degenerate forms on $\bOmega$ are the ones pulled back via the evaluation maps; it follows that they form a non-degenerate quadric.

We have thus shown that $\bOmega$ is a trisymplectic
structure. Clearly, the 
annihilators of  $\ev_L^*(\Omega_L)$
form the standard 3-web on $\Sec(M)$.
Therefore, the trisymplectic structure
$\bOmega$ generates the standard $SL(2)$-web, described in Section \ref{_ratcurves_Subsection_} above.
 \endproof

\hfill

Now let $G$ be a compact Lie group acting on $M$ by
hyperk\"ahler isometries; assume that a hyperk\"ahler
moment map for the action of $G$ on $M$ exists. Let
$\Sec_0(M)$ be the space of regular twistor sections,
considered with the induced $SL(2)$-web and trisymplectic
structure. The previous Claim immediately implies the
following Proposition.

\Proposition\label{_trisymple_on_Sec_Proposition_}
Consider a hyperk\"ahler manifold $M$ equipped with
an action of a group $G$ and a hyperk\"ahler moment map
(tacitly assumed to be equivariant).
Let $\bOmega$ be the trisymplectic structure on the space $\Sec_0(M)$
of regular twistor sections constructed in Claim \ref{_trisy_on_Sec_Claim_}.
Given any $L\in \C \p1$, let $\mu_L:\; (M,L)\arrow \g^*\otimes_\R \C$
denote the corresponding holomorphic moment map, 
obtained from the hyperk\"ahler moment map,
and consider the composition 
\[ \bMu_L:= \mu_L\circ \ev_L:\; \Sec(M)\arrow \g^*\otimes_\R \C.
\]
Then 
\[
  \bMu_\C:= \bMu_I \oplus \bMu_J \oplus \bMu_K:\; 
  \Sec_0(M)\arrow \g^*\otimes_\R\C^3
\]
is a trisymplectic moment map on $\Sec_0(M)$,
for an appropriate identification $\C^3 \cong \bOmega$.
\ep 

{\bf Proof:} Clearly, $\bMu_L$ is a moment map  
for the action of $G$ on $\Sec_0(M)$ associated
with a degenerate holomorphic 2-form $\ev_L^*(\Omega_L)$.
Indeed, for any $g\in \g=\Lie(G)$, one has
$d\mu_L(G)=\Omega_L\cntrct g$, because $\mu_L$
is a moment map form $G$ acting on $(M,L)$.
Then $d\bMu_L(g) = (\ev_L^* \Omega_L)\cntrct g$.

However, by Claim \ref{_trisy_on_Sec_Claim_},
$\bOmega=\ev_I^*\Omega_I \oplus \ev_J^*\Omega_J \oplus\ev_K^*\Omega_K$,
hence the moment map $\bMu$ associated with $\bOmega$ 
is expressed as an appropriate linear combination of
$\bMu_I, \bMu_J, \bMu_K$.
\endproof

\subsection{Trihyperk\"ahler reduction on the space of twistor sections}
\label{_trihy_red_def_Subsection_}

Let $\Tw(M)=M \times \C \p1$ be the twistor space of the
hyperk\"ahler manifold $M$,
considered as a Riemannian manifold with its product
metric. We normalize the Fubini-Study metric on the second
component of $\Tw(M)=M\times \C \p1$ in such a way that
$\int_{\C \p1}\Vol_{\C \p1}$ of the Riemannian volume form is $1$.

\Claim\label{_volume_psh_Claim_}
Let $\phi$ be the area function $\Sec(M) \stackrel \phi \arrow \R^{>0}$
mapping a curve $S\in \Sec(M)$ to its its Riemannian volume
$\int_S\Vol_S$.  Then $\phi$ is a K\"ahler potential, 
that is, $dd^c\phi$ is a K\"ahler form on $\Sec(M)$,
where $d^c$ is the usual twisted differential, $d^c:=-IdI$.
\ec

{\bf Proof:} See \cite[Proposition 8.15]{_NHYM_}. \endproof

\hfill

Claim \ref{_volume_psh_Claim_} leads to the following Proposition.

\Proposition\label{_ave_mome_Proposition_}
Assume that $G$  is a compact Lie group 
acting on $M$ by hyperk\"ahler automorphisms,
and admitting a hyperk\"ahler moment map. 
Consider the corresponding action of $G$ on
$\Sec_0(M)$, and let $\omega_\Sec=dd^c\phi$ be the K\"ahler form 
on $\Sec_0(M)$ constructed in Claim \ref{_volume_psh_Claim_}.
Then the corresponding moment map can be written as
\[
\bMu_\R(x):= \Av\limits_{L\in \C \p1} \mu^\R_L(x),
\]
where $\Av\limits_{L\in \C \p1}$ denotes the operation
of taking average over $\C \p1$, and $\mu^\R_L:\; (M,L)\arrow \g^*$
is the K\"ahler moment map associated with the action of $G$ on $(M,L)$. 
\ep

{\bf Proof:} Let $(X,I,\omega)$ be a K\"ahler manifold, $\phi$ a K\"ahler
potential on $X$, and $G$ a real Lie group preserving
$\phi$ and acting on $X$ holomorphically. Then an
equivariant moment map can be written as 
\begin{equation} \label{_moment_through_pote_Equation_}
\mu(g)=-\Lie_{I(g)}\phi,
\end{equation}
where $g\in \Lie(G)$ is an element of the Lie algebra.
Indeed, $\omega= dd^c \phi$, hence 
\[ 
  \Lie_{I(g)}\phi= d\phi\cntrct (I(g))= (d^c \phi)\cntrct g,
\]
where $\cntrct$ denotes a contraction of a differential form
with a vector field, and 
\[ 
  d\Lie_{I(g)}\phi=d((d^c \phi)\cntrct g)= \Lie_g(d^c\phi)- (dd^c\phi)\cntrct g
= -\omega\cntrct g
\]
by Cartan's formula.
Applying this argument to $X=\Sec(M)$ and $\phi=\Area(S)$, we obtain 
that $\bMu_\R(S)(g)$ is a Lie derivative of $\phi$ along $I(g)$. 

To prove that $\bMu_\R(g)$
 is equal to an average of the moment maps $\mu_L^\R(g)$, we
notice that (as follows from \cite{_NHYM_}, (8.12) and Lemma 4.4),
for any fiberwise tangent vectors $x,y\in T_{\pi} \Tw(M)$, one has
\[
 dd^c \phi(x,Iy)=\int_S(x,y)_H\Vol_{\C \p1},
\]
where $\Vol_{\C \p1}$ is the 
appropriately normalized volume form,
and $(\cdot, \cdot)_H$ the standard
Riemannian metric on $\Tw(M)=M \times S^2$.
Taking $g=y$, we obtain
\[
d (\bMu_\R g)(x)=\int_S(x,g)_H\Vol_{\C \p1}=
  \int_S d(\mu_L^\R g)(x)\Vol_{\C \p1}.
\]
The last formula is a derivative of 
an average of $d\mu^\R_L(g)$ over $L\in \C \p1$.
\endproof

\hfill

{}From Proposition \ref{_ave_mome_Proposition_} it
is is apparent that a trisymplectic quotient
of the space $\Sec_0(M)$ can be obtained using the
symplectic reduction associated with the
real moment map $\bMu_\R$. This procedure
is called {\bf the trihyperk\"ahler reduction}
of $\Sec_0(M)$.


\Definition\label{_trihype_mome_map_Definition_}
The map $\bMu:= \bMu_\R \oplus \bMu_\C :\; \Sec_0(M) \arrow \g^*\otimes \R^7$, 
where $\bMu_\C$ is the trisymplectic moment map constructed in Proposition \ref{_trisymple_on_Sec_Proposition_}
and $\bMu_\R$ is the K\"ahler moment map constructed in Proposition \ref{_ave_mome_Proposition_},
is called {\bf the trihyperk\"ahler moment map} on $\Sec_0(M)$.
\ed

\Definition\label{_trihype_re_Definition_}
Let $c\in \g^* \otimes \R^7$ be a $G$-invariant vector. 
Consider the space $\Sec_0(M)$ of the regular twistor 
sections. Then the 
quotient $\Sec_0(M)\4 G:= \bMu^{-1}(c)/G$ is called
{\bf the trihyperk\"ahler reduction} of $\Sec_0(M)$.
The space $\bMu^{-1}(c)/G$ is naturally identified with
the K\"ahler quotient $\bMu_\C^{-1}(c)\2 G$, hence
the space $\Sec_0(M)\4 G$ is a complex manifold.
\ed

\Remark\label{rem_coincidence}
Note that the trihyperk\"ahler reduction $\bMu^{-1}(c)/G$
of $\Sec_0(M)$ coincides with the trisymplectic quotient 
$\bMu_\C^{-1}(c)/G_\C$, provided this last quotient is
well-defined, i.e. all $G_\C$-orbits are closed, and the
orbit space is Hausdorff and equipped with a complex
structure compatible with one on $\bMu_\C^{-1}(c)$.
Here the action of $G_\C$ on $\bMu_\C^{-1}(c)$ is obtained by
complexifying the action of $G$ on the complex manifold
$\bMu_\C^{-1}(c)$. Indeed, $(\mu_\C\oplus\mu_\R)^{-1}(c)/G$ is
precisely the space of stable
$G_\C$-orbits in $\mu_\C^{-1}(c)$.
\er

It follows from Theorem \ref{_trisy_quotient_trisy_Theorem_} 
that $\Sec_0(M)\4 G$ is equipped with an $SL(2)$-web generated by a
natural trisymplectic structure $\bOmega$, provided that 
the image of $\g=\Lie(G)$ in
$TM$ is non-degenerate at any point, in the sense of
Definition \ref{_non_dege_trisy_Definition_}.

We are finally ready to state the main result of this paper.

\Theorem\label{_trihk_red_equal_on_sec_Theorem_}
Let $M$ be flat hyperk\"ahler manifold, and $G$ a compact
Lie group acting on $M$ by hyperk\"ahler automorphisms.
Suppose that a hyperk\"ahler moment map exists, and the
hyperk\"ahler quotient $M\3 G$ is smooth.
Then there exists an open embedding
$\Sec_0(M)\4 G\stackrel \Psi\arrow \Sec_0(M\3 G)$,
which is compatible with the trisymplectic
structures on $\Sec_0(M)\4 G$ and $\Sec_0(M\3 G)$.
\et

In particular, it follows that if $M$ is a flat hyperk\"ahler manifold, then the trihyperk\"ahler reduction of $\Sec_0(M)$ is a smooth
trisymplectic manifold whose dimension is twice that of the hyperk\"ahler quotient of $M$.

The flatness condition is mostly a technical one, but it
will suffice for our main goal, which is a description of
the moduli space of instanton bundles
on $\C\p3$ (see Section \ref{_appl_} below).

We do believe that the conclusions of Theorem \ref{_trihk_red_equal_on_sec_Theorem_}
should hold without such a condition. The crucial point is Proposition \ref{_real_mome_on_section_Proposition_} below, for which we could not
find a proof without assuming flatness; other parts of our proof, which will be completed at the end of Subsection \ref{_principal_and_red_Subsection_},
do work without it.




\section{Moment map on twistor sections}
\label{_mome_map_on_sec_Subsection_}

In this Section, we let $M$ be a \emph{flat} hyperk\"ahler
manifold. More precisely, let $M$ be an open subset of a
quaternionic vector space $V$, equipped with a flat
metric; completeness of the metric is not relevant.
Thus $\Tw(M)$ is isomorphic to the corresponding open subset of $\Tw(V)=V\otimes\op1(1)$, and $\Sec_0(M)=\Sec(M)$ is the open subset of
$\Sec(V)=V\otimes_\C \Gamma(\op1(1))\simeq V \otimes_\R \C^2$ consisting of those sections of $V\otimes\op1(1)$ that take values in $M\subset V$.

More precisely, let $[z:w]$ be a choice of homogeneous coordinates on $\C\p1$, so that $\Gamma(\op1(1)) \simeq \C z \oplus\C w$. A section
$\sigma\in Sec_0(M)$ will of the the form $\sigma(z,w)=zX_1+wX_2$ such that $\sigma(z,w)\in M$ for every $[z:w]\in\C\p1$.

Let $G$ be a compact Lie group acting on $M$ by hyperk\"ahler automorphisms, with
$\mu:M\to \g^*\otimes \langle I,J,K\rangle$
being the corresponding hyperk\"ahler moment map; let 
$\mu_I^\R$, $\mu_J^\R$, $\mu_K^\R$ denote its components.
By definition, these components are the real moment maps
associated with the symplectic
forms $\omega_I, \omega_J, \omega_K$, respectively. Given
a complex structure $L=aI +bJ + cK$, $a^2+b^2+c^2=1$,
we denote by $\mu_L^\R$ the corresponding real moment map, 
\begin{equation}\label{_real_mome_Equation_}
\mu_L^\R=a \mu_I^\R+ b\mu_J^\R+ c\mu_K^\R.
\end{equation}

The components $\mu_I, \mu_J, \mu_K$ of the hyperk\"ahler moment map can be regarded as real-valued, quadratic polynomials on $V$.
The corresponding complex linear polynomial functions $\bMu_I, \bMu_J, \bMu_K$ 
generate the trisymplectic moment map for $\Sec_0(M)$.
Consider the decomposition $V \otimes_\R \C^2=V^{1,0}_I\oplus V^{0,1}_I$,
where $I\in \End V$ acts on $V^{1,0}_I\subset V\otimes_\R \C$ 
as $\1$ and on  $V^{0,1}_I$  as $-\1$. We may regard the
trisymplectic moment map $\bMu_\C:\Sec_0(M)\arrow \g^*\otimes \C^3$
as a quadratic form $Q$ on $\Sec_0(M)\simeq V^{1,0}_I\oplus V^{0,1}_I$,  
and express it  as a sum of three components,
$$ Q^{2,0}:\; V^{1,0}_I\otimes V^{1,0}_I \arrow
\g^*\otimes \C, \ \ \ \ 
Q^{1,1}:\; V^{1,0}_I\otimes V^{0,1}_I \arrow \g^*\otimes \C $$
$$ {\rm and}~~ Q^{0,2}:V^{0,1}_I\otimes V^{0,1}_I \arrow \g^*\otimes \C. $$

For each $L\in \C \p1$, let $\mu_L^\R$ be 
the real moment map, depending on $L\in \C \p1$
as in \eqref{_real_mome_Equation_}, and consider the evaluation map
$\Sec_0(M)\stackrel{\ev_L}\arrow (M,L)$ 
(see Claim \ref{_trisy_on_Sec_Claim_} for definition). Let also
$\bMu_L^\R:= \ev_L^* \mu_L^\R$ be the pullback of
$\mu_L^\R$ to $\Sec_0(M)$.

{}From Proposition \ref{_trisymple_on_Sec_Proposition_},
the following description of the moment maps on $\Sec(M)$
can be obtained. This result will be used later
on in the proof of Theorem \ref{_trihk_red_equal_on_sec_Theorem_}.

\Proposition\label{_real_mome_on_section_Proposition_}
Let $G$ be a real Lie group acting on a flat hyperk\"ahler
manifold $M$ by hyperk\"ahler isometries, 
$\Sec(M)\stackrel{\bMu_\C}\arrow \g^*\otimes \C^3$
the corresponding trisymplectic moment map. We 
consider the real moment map 
$\mu_L^\R$ as a $\g^*$-valued function on  $\Tw(M)=M\times \C \p1$.
Let $S\in \Sec(M)$ be a point which satisfies $\bMu_\C(S)=0$.  
Then $\mu^\R_L\restrict S$ is constant.
\ep

\begin{proof} 
We must show that for each $S\in \Sec(M)$ satisfying $\bMu_\C(S)=0$,
one has $\frac d{dL} \bMu^\R_L(S)=0$.

We express $S\in V \otimes_\R \C$ as $S=s^{1,0}_L+s^{0,1}_L$,
with $s^{1,0}_L\in V^{1,0}_L$ and $s^{0,1}_L\in V^{0,1}_L$
Then 
\begin{equation}\label{_bMu^R_via_Q^1,1_Equation_}
  \bMu^\R_L(S)=Q^{1,1}_L(s_L^{1,0}, \overline{s_L^{1,0}})
\end{equation}
where $Q^{1,1}_L$ denotes the (1,1)-component of $\bMu^\C$
taken with respect to $L$. This clear, because 
$Q^{1,1}$ is obtained by complexifying $\mu_L^\R$
(this is an $L$-invariant part of the hyperk\"ahler 
moment map).

For an ease of differentiation, we rewrite
\eqref{_bMu^R_via_Q^1,1_Equation_} as
\[
\bMu^\R_L(S)=Q(s_L^{1,0}, \overline{s_L^{1,0}})=
\Re(Q(s_L^{1,0}, s_L^{1,0})).
\]
This is possible, because $s_L^{1,0}\in V_L^{1,0}$
and $\overline{s_L^{1,0}}\in V_L^{0,1}$, hence $Q^{1,1}_L$
is the only component of $Q$ which is non-trivial on
$(s_L^{1,0}, \overline{s_L^{1,0}})$. Then
\begin{equation}\label{_diffe_bMu^R_L_Equation_}
\frac{d}{dL} \bMu^\R_L(S)\restrict{L=I}=
\Re\left[Q\left(s_I^{1,0}, \frac{ds_L^{1,0}}{dL}\restrict{L=I}\right)\right].
\end{equation}
However, $\frac{ds_L^{1,0}}{dL}\restrict{L=I}$ is clearly proportional to
$s_I^{0,1}$ (the coefficient of proportionality
depends on the choice of parametrization 
on $\C \p1\ni L$), hence \eqref{_diffe_bMu^R_L_Equation_}
gives
\[
\frac{d}{dL} \bMu^\R_L(S)\restrict{L=I}=\lambda
\Re\left[ Q(s_I^{1,0}, s_I^{0,1})\right]
\]
and this quantity vanishes, because 
\[ 
Q(s_L^{1,0}, s_L^{0,1})=Q^{1,1}(S) = \mu^\C_L(S).
\]
\end{proof}


\section{Trisymplectic reduction and hyperk\"ahler reduction}


\subsection{The tautological map $\tau:\; \Sec_0(M)\4 G \arrow \Sec(M\3 G)$} \label{tau-map}

Let $M$ be a hyperk\"ahler manifold, and $G$ a compact Lie group
acting on $M$ by hyperk\"ahler isometries, and admitting a
hyperk\"ahler moment map. A point in $\Sec_0(M)\4 G$ is 
represented by a section $S\in \Sec_0(M)$ which satisfies
$\bMu_\C(S)=0$ and $\bMu_\R(S)=0$. The first condition,
by Proposition \ref{_trisymple_on_Sec_Proposition_},
implies that for each $L\in \C \p1$, the corresponding
point $S(L)\in (M,L)$ belongs to the zero set of the
holomorphic symplectic map $\mu_L^\C:\; (M,L)\arrow \g^* \otimes_\R \C$.
Using the evaluation map defined in Claim \ref{_trisy_on_Sec_Claim_},
this is written as
\[
\mu_L^\C(\ev_L(S))=0.
\]
By Proposition \ref{_real_mome_on_section_Proposition_},
the real moment map $\mu^\R_L$ is constant on $S$:
\[
\mu_L^\C(\ev_L(S))=const.
\]
By Proposition \ref{_ave_mome_Proposition_}, 
the real part of the trihyperk\"ahler moment map
$\bMu_\R(S)$ is an average of $\mu_L^\C(\ev_L(S))$
taken over all $L\in \C \p1$. Therefore, $\bMu_\C(S)=0$ implies
\[
\bMu_\R(S)=0\Leftrightarrow \mu_L^\C(\ev_L(S))=0 \ \  \forall L\in \C \p1.
\]
We obtain that for each $S\in \Sec_0(M)$ which satisfies
$\bMu_\R(S)=0, \bMu_\C(S)=0$, and each $L\in \C \p1$, one has
\begin{equation}\label{_hk_mome_from_trihk_mome_Equation_}
\mu_L^\C(x)=0, \mu_L^\R(x)=0, 
\end{equation}
where $x= \ev_L(S)$.
A point $x\in (M,L)$ satisfying 
\eqref{_hk_mome_from_trihk_mome_Equation_}
belongs to the zero set of the hyperk\"ahler moment map
$\mu:\; M \arrow \g^* \otimes \R^3$. Taking a quotient over $G$,
we obtain a map $S/G:\; \C \p1 \arrow \Tw(M\3 G)$,
because $M\3 G$ is a quotient of $\mu^{-1}(0)$ by $G$.
This gives a map $\tau:\; \Sec_0(M)\4 G \arrow \Sec(M\3 G)$
which is called {\bf a tautological map}.
Note that $\Sec_0(M)\4 G$ has a trisymplectic
structure by Theorem \ref{_trisy_quotient_trisy_Theorem_},
outside of the set of its degenerate points (in the sense of
Definition \ref{_non_dege_trisy_Definition_}),
and $\Sec_0(M\3 G)$ is a trisymplectic manifold by
Proposition \ref{_trisymple_on_Sec_Proposition_}.

\Proposition\label{_tauto_trisymple_Proposition_}
Let $M$ be a flat hyperk\"ahler manifold, and $G$ a compact Lie group
acting on $M$ by hyperk\"ahler isometries, and admitting a
hyperk\"ahler moment map. Consider the tautological map
\begin{equation}\label{_tautolo_map_trisy_Equation_}
  \tau:\; \Sec_0(M)\4 G \arrow \Sec(M\3 G)
\end{equation}
defined above. Then $\tau(\Sec_0(M)\4 G)$ belongs to the
set $\Sec_0(M\3 G)$ of regular twistor sections in $\Tw(M\3 G)$.
Moreover, the image of $\g$ is non-degenerate, in the sense of
Definition \ref{_non_dege_trisy_Definition_}, and 
$\tau$ is a local diffeomorphism, compatible
with the trisymplectic structure.
\ep

{\bf Proof. Step 0:} We prove that for all points $S\in
\bMu_\C^{-1}(0)$, the image of $\g$ is non-degenerate, in the sense of
Definition \ref{_non_dege_trisy_Definition_}. This is the only step
of the proof where the flatness assumption is used.
We have to show that
the image $\g_S$ of $\g$ in $T_S\Sec(M)$
is non-degenerate for all $S\in \bMu^{-1}_\C(0)$.
This is equivalent to
\begin{equation}\label{_non-degenerate_Equation_}
T_S \bMu^{-1}_\C(0)\cap \Mat(2)\g_S=\g_S.
\end{equation}
Indeed, $\g_S$ is non-degenerate
if and only if the quotient 
$T_S\Sec(M)/\Mat(2)\g_S$ is trisymplectic
(Claim \ref{_trisymple_subspa_Claim_}). 
By \eqref{_TN_as_comple_Equation_},
$T_S \bMu^{-1}_\C(0)$ is  an 
orthogonal complement of $I\g_S+J\g_S+K\g_S$ with respect
to the holomorphic Riemannian form $B$ associated with
the trisymplectic structure, where $I,J,K$
is some quaternionic basis in $\Mat(2)$.
If $\g_S$ is non-degenerate, the orthogonal
complement of $\Mat(2)\g_S$ is isomorphic
to $T_S \bMu^{-1}_\C(0)/\g_S$, which
gives \eqref{_non-degenerate_Equation_}.
Conversely, if \eqref{_non-degenerate_Equation_}
holds, the orthogonal complement
of  $I\g_S+J\g_S+K\g_S$ does not
intersect $I\g_S+J\g_S+K\g_S$,
hence the restriction of $B$ to 
$I\g_S+J\g_S+K\g_S$ is non-degenerate.
Therefore, non-degeneracy of $\g_S$ is implied by
Remark \ref{_non_dege_form_Remark_}.

Now, let $S\in \Sec_0(M)$ be a twistor section
which satisfies $\bMu_\R(S)=\bMu_\C(S)=0$.
By Proposition \ref{_real_mome_on_section_Proposition_},
 for each $L\in \C P^1$, the corresponding 
point $(L,S_L)$ of $S$ satisfies $\mu_{hk}(S_L)=0$,
where $\mu_{hk}\; M \arrow \R^3 \otimes \g^*$ 
denotes the hyperk\"ahler moment map.
Let $g\in \Mat(2)\g_S\cap T_S \bMu^{-1}_\C(0)$ be a vector
obtained as a linear combination $\sum H_i g_i$,
with $g_i \in \g_S$ and $H_i\in \Mat(2,\C)$.
At each point $(L,S_L)\in S$, $g$ is evaluated
to a linear combination $\sum H_i^L g_i^L$ with quaternionic
coefficients, tangent to $\mu_{hk}^{-1}(0)$.
However, a quaternionic linear combination
of this form can be tangent to $\mu_{hk}^{-1}(0)$
only if all $H_i^L$ are real, because for each
hyperk\"ahler manifold $Z$ one has a decomposition
$T_x(\mu_{hk}^{-1}(0))\oplus I\g\oplus J\g\oplus K\g=TxZ$.
We have proved that any $g\in \Mat(2)\g_S\cap T_S \bMu^{-1}_\C(0)$ belongs to 
the image of $\g$ at each point $(L,S_L)\in S$.
This proves \eqref{_non-degenerate_Equation_},
hence, non-degeneracy of $\g_S$.

{\bf Step 1:} We prove that the image 
$\tau(\Sec_0(M)\4 G)$ belongs to $\Sec_0(M\3 G)\subset \Sec(M\3 G)$.
Given $S \in \Sec_0(M)\4 G$, consider its image $\tau(S)$
as a curve in $\Tw(M\3 G)$, and let $N(\tau(S))$ be its normal 
bundle. Denote by $\tilde S\in \Sec_0(M)$ the twistor section
which satisfies $\bMu_\R(\tilde S)=0, \bMu_\C(\tilde S)=0$
and gives $S$ after taking a quotient.
Then 
\begin{equation}\label{_normal_tau(S)_Equation_} 
  N(\tau(S))\restrict L = 
\frac{T_{\ev_L(\tilde S)}(M,L)}{\langle \g + I\g + J \g + K\g\rangle}
\end{equation}
where $\ev_L:\; \Sec(M) \arrow (M,L)$ is the
standard evaluation map. 

A bundle $B \cong \bigoplus_{2n} \calo(1)$
can be constructed from a quaternionic vector space $W$
as follows. For any $L\in \C \p1$, considered as a quaternion
satisfying $L^2=-1$, one takes the complex vector space $(W,L)$
as a fiber of $B$ at $L$. Denote this bundle as $B(W)$.
Now, \eqref{_normal_tau(S)_Equation_} 
gives 
\[ 
  N(\tau(S))= \frac{N(\tilde S)}
 {B(\langle \g + I\g + J \g + K\g\rangle)}
\]
giving a quotient of $\bigoplus_{2i} \calo(1)$
by $\bigoplus_{2j} \calo(1)$, which is also a direct
sum of $\calo(1)$. Therefore, $\tau(S)$ is regular.

\hfill

{\bf Step 2:} The tautological map 
$\tau:\; \Sec_0(M)\4 G \arrow \Sec(M\3 G)$
is a local diffeomorphism. This follows from the
implicit function theorem. Indeed, let
$S\in \Sec_0(M)\4 G$ be a point associated with
$\tilde S\in \Sec_0(M)$, satisfying 
$\bMu_\R(\tilde S)=0, \bMu_\C(\tilde S)=0$
as in Step 1. Then the differential of $\tau$ is a map
\begin{equation}\label{_dtau_Equation_}
d\tau:\; \frac{\Gamma(N\tilde S)}{\Mat(2, \C)\cdot \g}\arrow
\Gamma(N\tau(S)),
\end{equation}
where $\g = \Lie(G) \subset T\Tw(M)$.
Let $N_g \tilde S$ be a sub-bundle of $N\tilde S$
spanned by the image of $\langle \g + I\g + J \g + K\g\rangle$
By Step 1, $N_gS\cong \calo(1)^k$, and, indeed, a 
subspace of $\Gamma(N\tilde S)$ generated by
$\Mat(2, \C)\cdot \g$ coincides with
$\Gamma(N_g \tilde S)$. Similarly,
$\Gamma(N\tau(S))\cong \Gamma(N\tilde S/N_g \tilde S)$.
We have shown that the map \eqref{_dtau_Equation_}
is equivalent to 
\[
\frac{\Gamma(N\tilde S)}{\Gamma(N_g \tilde S)} \arrow
\Gamma(N\tilde S/N_g \tilde S).
\]
By step 1, the bundles $N\tilde S$ and $N_g \tilde S$
are sums of several copies of $\calo(1)$, hence
this map is an isomorphism.

\hfill

{\bf Step 3:} We prove that $\tau$ is compatible with the
trisymplectic structure. The trisymplectic structure on
$\Sec(M\3 G)$ is induced by a triple of holomorphic symplectic
forms $\langle \ev_I^*(\Omega_I), \ev_J^*(\Omega_J), 
\ev_K^*(\Omega_K)\rangle$ (Claim \ref{_trisy_on_Sec_Claim_}).
{}From the construction in Theorem \ref{_trisy_quotient_trisy_Theorem_}
it is apparent that the same triple generates
the trisymplectic structure on $\Sec_0(M)\4 G$. Therefore,
$\tau$ is compatible with the trisymplectic structure.
We proved Proposition \ref{_tauto_trisymple_Proposition_}.
\endproof

\subsection{Trihyperk\"ahler reduction and homogeneous bundles on $\C \p1$}
\label{_principal_and_red_Subsection_}

Let $M$ be a hyperk\"ahler manifold, and $G$ a compact
Lie group acting on $M$ by hyperk\"ahler isometries,
and equipped with a hyperk\"ahler moment map. 
Consider the set $Z\subset \Tw(M)$
consisting of all points $(m,L)\in \Tw(M)$
such that the corresponding holomorphic
moment map vanishes on $m$: $\mu_L^\C(m)=0$.
By construction, $Z$ is a complex subvariety
of $\Tw(M)$. Let $G_\C$ be a complexificaion
of $G$, acting on $\Tw(M)$ in a natural way, and
$G_\C\cdot(m,L)$ its orbit. This orbit is called
{\bf stable}, if $G_\C\cdot m \subset (M,L)$
intersects the zero set of the real moment map,
\[
G_\C\cdot m\cap \left(\mu_L^\R\right)^{-1}(0)\neq \emptyset.
\]
As follows from the standard results about K\"ahler reduction,
the union $Z_0\subset Z$ of stable orbits is open in $Z$,
and the quotient $Z_0/G_\C$ is isomorphic, as a complex
manifold, to $\Tw(M\3 G)$. Consider the corresponding
quotient map, 
\begin{equation}\label{_quo_in_twi_Equation_}
P:\; Z_0\arrow Z_0/G_\C=\Tw(M\3 G).
\end{equation}
For any twistor section $S\in \Sec(M\3 G)$,
consider its preimage $P^{-1}(S)$. Clearly,
$P^{-1}(S)$ is a holomorphic homogeneous vector
bundle over $S\cong \C \p1$. We denote this
bundle by $P_S$.

\Proposition\label{_tau_map_and_bundles_on_CP^1_Proposition_}
Let $M$ be a flat hyperk\"ahler manifold, and $G$ a compact Lie group
acting on $M$ by hyperk\"ahler isometries, and admitting a
hyperk\"ahler moment map. Consider the tautological map
$\tau:\; \Sec_0(M)\4 G \arrow \Sec_0(M\3 G)$
constructed in Proposition 
\ref{_tauto_trisymple_Proposition_}.
Given a twistor section  $S\in \Sec_0(M\3 G)$, let
$P_S$ be a holomorphic homogeneous bundle
constructed above. Then 
\begin{description}
\item[(i)] The point $S$ lies in $\im \tau$
if and only if the bundle $P_S$ admits
a holomorphic section (this is equivalent
to $P_S$ being trivial).
\item[(ii)] The map $\tau:\; \Sec_0(M)\4 G \arrow \Sec_0(M\3 G)$
is an open embedding.
\end{description}
\ep
{\bf Proof:} A holomorphic section $S_1$ of $P_S$
can be understood as a point in $\Sec(M)$. Since 
$S_1$ lies in the union of all stable orbits, denoted 
earlier as $Z_0\subset Z\subset \Tw(M)$, the real moment map
$\mu_L^\R$ is constant on $S_1$ (Proposition 
\ref{_real_mome_on_section_Proposition_}). 
By definition of $Z_0$, for each $(z,L)\in Z_0$,
there exists $g\in G_\C$ such that $\mu_L^\R(gz)=0$.

Therefore, $\bMu_\R(gS_1)=0$ for appropriate
$g\in G_\C$. This gives $\tau(S_2)=S$, where 
$S_2\in \Sec_0(M)\4 G$ is a point corresponding to $gS_1$.
Conversely, consider a point 
$S_2\in \Sec_0(M)\4 G$, such that $\tau(S_2)=S$,
and let $S_1\in S_0(M)$ be the corresponding twistor
section. Then $S_1$ gives a section of $P_S$.
We proved Proposition \ref{_tau_map_and_bundles_on_CP^1_Proposition_}
(i). 

To prove  Proposition \ref{_tau_map_and_bundles_on_CP^1_Proposition_}
(ii), it would suffice to show the following.
Take $S\in \Sec_0(M\3 G)$, and let $S_1, S_2\in\Sec(M)$ be twistor
sections which lie in $Z_0$ and satisfy $\bMu_\R(S_i)=0$.
Then there exists $g\in G$ such that $g(S_1)=S_2$.
Indeed, $\tau^{-1}(S)$ is the set of all such $S_i$
considered up to an action of $G$. 

Let $P_S\stackrel P \arrow  S$ be the homogeneous bundle
constructed above, and ${\cal P}$ its fiber, which
is a complex manifold with transitive action of $G_\C$.
Using $S_1$, we trivialize $P_S={\cal P}\times S$ in such a way that
$S_1=\{p\}\times S$ for some $p\in {\cal P}$.
Then $S_2$ is a graph of a holomorphic map
$\C \p1 \stackrel\phi\arrow {\cal P}$; to prove
 Proposition \ref{_tau_map_and_bundles_on_CP^1_Proposition_}
(ii) it remains to show that $\phi$ is constant.

Since all points of $\left(\mu_L^\R\right)^{-1}(0)$
lie on the same orbit of $G$, the image $\phi(\C \p1)$
belongs to $G_p:= G\cdot\{p\}\subset {\cal P}$.
However, $G_p$ is a totally real subvariety
in ${\cal P}= G_\C/\St(p)$. Indeed, $G_p$
is fixed by a complex involution which exchanges
the complex structure on $G_\C$ with its opposite.
Therefore, all complex subvarieties of $G_p$
are 0-dimensional, and $\phi:\; \C \p1 \arrow G_p\subset  {\cal P}$
is constant. We finished a proof of 
Proposition \ref{_tau_map_and_bundles_on_CP^1_Proposition_}.
\endproof

\hfill

The proof of Theorem \ref{_trihk_red_equal_on_sec_Theorem_}
follows. Indeed, by Proposition 
\ref{_tauto_trisymple_Proposition_}, the tautological
map  $\tau:\; \Sec_0(M)\4 G \arrow \Sec_0(M\3 G)$
is a local diffeomorphism compatible with the
trisymplectic structures, and by 
Proposition \ref{_tau_map_and_bundles_on_CP^1_Proposition_}
it is injective.

In particular, we have:

\begin{corollary}\label{smooth_corollary}
Let $M$ be a flat hyperk\"ahler manifold equipped with the action of
a compact Lie group by hyperk\"ahler isometries and admitting a
hyperk\"ahler moment map. Suppose that the hyperk\"ahler quotient is smooth and the 
trisymplectic quotient $\bMu_\C^{-1}(c)/G_\C$ is well defined. Then the trihyperk\"ahler reduction
of $\Sec_0(M)$ is a smooth trisymplectic manifold of dimension $2\dim M$.
\end{corollary}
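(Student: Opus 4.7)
The plan is to deduce this corollary directly from Theorem \ref{_trihk_red_equal_on_sec_Theorem_}, which has just been established and does essentially all of the work. The theorem furnishes an open embedding
\[
\Psi:\; \Sec_0(M)\4 G \hookrightarrow \Sec_0(M\3 G)
\]
that is compatible with the trisymplectic structures on both sides, so the corollary will follow once we record the basic geometric properties of the target $\Sec_0(M\3 G)$ and transport them back through $\Psi$.

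First I would invoke the construction of $\Sec_0$ reviewed in Subsection \ref{_ratcurves_Subsection_}: for any hyperk\"ahler manifold $N$, the space $\Sec_0(N)$ of regular sections of the twistor fibration is a Zariski open subvariety of the Douady space of rational curves in $\Tw(N)$, which is smooth of complex dimension $2\dim_\C N$. Indeed, a section $C \in \Sec_0(N)$ has normal bundle $N_C \cong \calo(1)^{\dim_\C N}$, so $T_C\Sec_0(N) = \Gamma(N_C)$ has complex dimension $2\dim_\C N$. Since $M\3 G$ is assumed to be a smooth hyperk\"ahler manifold, $\Sec_0(M\3 G)$ is therefore a smooth complex manifold of complex dimension $2\dim_\C(M\3 G)$, and by Claim \ref{_trisy_on_Sec_Claim_} it carries a canonical trisymplectic structure generating its standard $SL(2)$-web.

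Next, since $\Psi$ is an open embedding, the trihyperk\"ahler reduction $\Sec_0(M)\4 G$ inherits both smoothness and complex dimension $2\dim_\C(M\3 G)$ from $\Sec_0(M\3 G)$. The fact that the trisymplectic structure constructed on $\Sec_0(M)\4 G$ via the general trisymplectic reduction (Theorem \ref{_trisy_quotient_trisy_Theorem_}, whose non-degeneracy hypothesis was verified as Step~0 of Proposition \ref{_tauto_trisymple_Proposition_} under the flatness assumption) agrees with the one pulled back from $\Sec_0(M\3 G)$ is precisely the compatibility clause built into Theorem \ref{_trihk_red_equal_on_sec_Theorem_}. This gives the desired trisymplectic manifold structure.

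Since the entire content of the corollary is a bookkeeping consequence of the main theorem, there is no real obstacle beyond ensuring one reads off the dimension correctly; the hard work — establishing that the tautological map $\tau$ is a local diffeomorphism (Proposition \ref{_tauto_trisymple_Proposition_}) and globally injective via the stability-of-orbits argument on homogeneous bundles over $\C\p1$ (Proposition \ref{_tau_map_and_bundles_on_CP^1_Proposition_}) — has already been carried out in the proof of Theorem \ref{_trihk_red_equal_on_sec_Theorem_}.
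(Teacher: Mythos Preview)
Your approach is exactly the paper's: the corollary is stated with no proof beyond \endproof, as an immediate consequence of Theorem \ref{_trihk_red_equal_on_sec_Theorem_}, and your unpacking of that consequence (smoothness and dimension of $\Sec_0(M\3 G)$ transported back through the open embedding $\Psi$) is precisely what the authors intend.

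One point worth flagging, since you explicitly mention ``ensuring one reads off the dimension correctly'': the dimension you derive is $2\dim_\C(M\3 G)$, not $2\dim M$ as the corollary states. Your computation is the right one---it agrees with the paper's own remark immediately after Theorem \ref{_trihk_red_equal_on_sec_Theorem_} (``twice that of the hyperk\"ahler quotient of $M$'') and with the application in Theorem \ref{_instanton bdls_}---so the ``$2\dim M$'' in the corollary statement appears to be a slip.
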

\endproof


\section{Case study: moduli spaces of
  instantons} \label{_appl_}


In this Section, we give an application of the previous geometric constructions
to the study of the moduli space of framed instanton bundles on $\C\p3$. Our goal is to establish the smoothness of the moduli space of such
objects, and show how that proves the smoothness of the
moduli space of mathematical instanton bundles on
$\C\p3$. That partially settles the long standing conjecture
in algebraic geometry mentioned at the Introduction: the moduli space of
mathematical instanton bundles on $\C\p3$ of charge $c$ is
a smooth manifold of dimension $8c-3$, cf. \cite[Conjecture 1.2]{CTT}.

\subsection{Moduli space of framed instantons on $\R^4$} 
\label{instantons_r4}

We begin by recalling the celebrated ADHM construction of instantons, which gives a description of the moduli space of framed instantons on $\R^4$ in terms of a finite-dimensional hyperk\"ahler quotient.

Let $V$ and $W$ be complex vector spaces of dimension $c$ and $r$, respectively.
$$ \mathbf{B}=\mathbf{B}(r,c) := {\rm End}(V)\oplus{\rm End}(V)\oplus {\rm Hom}(W,V)\oplus{\rm Hom}(V,W). $$
A point of $\mathbf{B}$ is a quadruple $X=(A,B,I,J)$ with $A,B\in{\rm End}(V)$, $I\in{\rm Hom}(W,V)$ and $J\in{\rm Hom}(V,W)$. Together with its natural complex structure, the anti-linear involution
$X\to X^*:=(B^\dagger,-A^\dagger,J^\dagger,-I^\dagger)$ provides $\mathbf{B}$ with the structure of a quaternionic vector space; in particular, $\mathbf{B}$ becomes a flat hyperk\"ahler manifold.

A quadruple $X=(A,B,I,J)$ it is said to be
\begin{enumerate}
\item[(i)] {\em stable} if there is no subspace $S\subsetneqq V$ with $A(S),B(S),I(W)\subset S$;
\item[(ii)] {\em costable} if there is no subspace $0\neq S\subset V$ with $A(S),B(S)\subset S\subset \ker J$;
\item[(iii)] {\em regular} if it is both stable and costable.
\end{enumerate}
Let $\mathbf{B}^{\rm reg}$ denote the (open) subset of regular data. The group $G=U(V)$ acts on $\mathbf{B}^{\rm reg}$ in the following way:
\begin{equation}\label{action}
g\cdot(A,B,I,J) := (gAg^{-1},gBg^{-1},gI,Jg^{-1}).
\end{equation}
It is not difficult to see that this action is free and preserves the hyperk\"ahler structure provided above. The hyperk\"ahler moment map $\mu:\mathbf{B}^{\rm reg}\to\gu(V)^*\otimes\R^3$ can then be written in the following manner, cf. \cite[Section 3.2]{N}. Using the decomposition $\R^3\simeq\C\oplus\R$ (as real vector spaces), we decompose $\mu=(\mu_\C,\mu_\R)$ with
$\mu_\C$ and $\mu_\R$ given by
\begin{equation}\label{mmC}
\mu_\C(A,B,I,J) = [A,B]+IJ ~~{\rm and}
\end{equation}
\begin{equation}\label{mmR}
\mu_\R(A,B,I,J) = [A,A^\dagger]+[B,B^\dagger] + II^\dagger - J^\dagger J .
\end{equation}
The first component $\mu_\C$ is the holomorphic moment map $\mathbf{B}\to\gl(V)^*\otimes_\R\C$ corresponding to the natural complex structure on $\mathbf{B}$.

The so-called \emph{ADHM construction}, named after Atiyah, Drinfeld, Hitchin and Manin \cite{_ADHM_},
provides a bijection between the hyperk\"ahler quotient
$\calm(r,c):=\mathbf{B}^{\rm reg}(r,c)\3 U(V)$ and the moduli space of framed
instantons on the Euclidean 4-dimensional space $\R^4$;
see \cite{D1} or \cite[Theorem 3.48]{N}, and the references therein for details. Moreover, Maciocia has shown in \cite{Mac} that such bijection is a hyperk\"ahler isometry between $\calm(r,c)$ and the moduli space of framed instantons on $\R^4$ provided with its $L^2$-metric.

Let us now consider the trisymplectic reduction of $\Sec_0(\mathbf{B}^{\rm reg})$. As noted in the first few paragraphs of Section \ref{_mome_map_on_sec_Subsection_},
$\Sec_0(\mathbf{B})=\Sec(\mathbf{B})\simeq \mathbf{B}\otimes \Gamma(\op1(1))$, and $\Sec_0(\mathbf{B}^{\rm reg})$
is the (open) subset of $\Sec_0(\mathbf{B})$ consisting of those sections $\sigma$ such that $\sigma(p)$ is regular for every $p\in\C\p1$.

\begin{definition} \label{gr-def}
A section $\sigma\in \mathbf{B}\otimes \Gamma(\op1(1))$ is \emph{globally regular} if $\sigma(p)\in\mathbf{B}$ is regular for every $p\in\C\p1$ (cf. \cite[p. 2916-7]{FJ2}, where such sections are called \emph{$\C$-regular}).
\end{definition}

To be more precise, let $[z:w]$ be homogeneous coordinates on $\C\p1$; such choice leads to identifications
\begin{equation}\label{decompositions}
\Gamma(\op1(1)) \simeq \C z \oplus \C w \simeq \C^2 ~~{\rm and}~~ \Gamma(\op1(2)) \simeq \C z^2 \oplus \C w^2\oplus \C zw \simeq \C^3.
\end{equation}
It follows that $\Sec_0(\mathbf{B}) \simeq\mathbf{B}\oplus\mathbf{B}$, so a point $\widetilde{X}\in\Sec_0(\mathbf{B})$ can regarded as a pair $(X_1,X_2)$ of ADHM data; $\widetilde{X}$ is globally regular (i.e. $\widetilde{X}\in\Sec_0(\mathbf{B}^{\rm reg})$) if any linear combination $zX_1+wX_2$ is regular.

The action (\ref{action}) of $GL(V)$ (hence also of $U(V)$) on $\mathbf{B}^{\rm reg}$ extends to \linebreak $\Sec_0(\mathbf{B}^{\rm reg})$ by acting trivially on the $\Gamma(\op1(1))$ factor, i.e. $g\cdot(X_1,X_2)=(g\cdot X_1,g\cdot X_2)$. 

Using the identification $\C^3\simeq \Gamma(\op1(2))$ above, it follows that the trisymplectic moment map
$$ \bMu_\C:\Sec_0(\mathbf{B}^{\rm reg})\to\gu(V)^*\otimes_\R\Gamma(\op1(2)) $$
constructed in Proposition \ref{_trisymple_on_Sec_Proposition_} satisfies
$\bMu_\C(\sigma)(p) = \mu_\C(\sigma(p))$ for $\sigma\in\Sec_0(\mathbf{B}^{\rm reg})$ and $p\in\C\p1$.

More precisely, let $X_1=(A_1,B_1,I_1,J_1)$ and $X_2=(A_2,B_2,I_2,J_2)$; consider the section $\sigma(z,w)=zX_1+wX_2\in\Sec_0(\mathbf{B}^{\rm reg})$. The identity $\bMu_\C(\sigma)(p) = \mu_\C(\sigma(p))$ means that $\bMu_\C(\sigma)=0$ iff $\mu_\C(zX_1+wX_2)=0$ for every $[z:w]\in\C\p1$. Note that
\begin{equation}\label{1-diml-eqns}
\mu_\C(zX_1+wX_2)=0 \Leftrightarrow 
\left\{
\begin{array}{l}
~[ A_1 , B_1 ]+I_1J_1 = 0 \\
~[ A_2 , B_2 ]+I_2J_2 = 0 \\
~[ A_1 , B_2 ] + [ A_2,B_1 ] + I_1J_2 + I_2J_1 = 0
\end{array} \right.
\end{equation}

The three equations on the right hand side of equation (\ref{1-diml-eqns}) are known as the \emph{1-dimensional ADHM equations}; they were first considered by Donaldson in \cite{D1} (cf. equations (a-c) in \cite[p. 456]{D1}) and further studied in \cite{FJ2} (cf. equations (7-9) in \cite[p. 2917]{FJ2}) and generalized in \cite[equation (3)]{J-cr}.

One can show that globally regular solutions of the
1-dimensional ADHM equations are GIT-stable with respect
to the $GL(V)$-action, see \cite[Section 3]{_Hauzer_Langer_}
and \cite[Section 2.3]{HJV}. Therefore, according to Remark \ref{rem_coincidence},
the trihyperk\"ahler quotient $\Sec_0(\mathbf{B}^{\rm reg})\4 U(V)$
is well defined and coincides with $\bMu_\C^{-1}(0)/GL(V)$.

\subsection{Moduli space of framed instanton bundles on  $\C\p3$} 
\label{framed_instantons_bundles}

Recall that an {\em instanton bundle} on $\C\p3$ is a locally free coherent sheaf $E$ on $\C\p3$ satisfying the following conditions
\begin{itemize}
\item $c_1(E)=0$;
\item $H^0(E(-1))=H^1(E(-2))=H^{2}(E(-2))=H^3(E(-3))=0$.
\end{itemize}
The integer $c:=c_2(E)$ is called the {\em charge} of $E$. One can show that if $E$ is an instanton bundle on $\C\p3$, then $c_3(E)=0$.

Moreover, a locally free coherent sheaf $E$ on $\C\p3$ is
said to be \emph{of trivial splitting type} if there is a line
$\ell\subset\C\p3$ such that the restriction $E|_\ell$ is the free sheaf, i.e.
$E|_\ell\simeq{\cal O}_{\ell}^{\oplus{\rm rk}E}$. A {\em framing} on $E$ at the line $\ell$ is the choice of an isomorphism $\phi:E|_\ell\to{\cal O}_{\ell}^{\oplus{\rm rk}E}$. A {\em framed bundle} (at $\ell$) is a pair $(E,\phi)$ consisting of a locally free coherent
sheaf $E$ of trivial splitting type and a framing $\phi$ at $\ell$. Two framed bundles
$(E,\phi)$ and $(E',\phi')$ are isomorphic if there exists a bundle isomorphism
$\Psi:E\to E'$ such that $\phi'=\phi\circ(\Psi|_\ell)$. Let $\calf_\ell(r,c)$ denote the moduli space of isomorphism classes of rank $r$, charge $c$ instanton bundles on $\C\mathbb{P}^{3}$ framed at a fixed
line $\ell$.

\begin{proposition}\label{_insta_same_as_trihk_Theorem_}
The moduli space $\calf_\ell(r,c)$ is naturally identified, as a complex manifold, with the trihyperk\"ahler
reduction $\Sec_0(\mathbf{B}^{\rm reg}(r,c))\4 U(V)$.
\end{proposition}

\begin{proof}
According to \cite[Theorem 4.2]{HJV} (see also \cite[Theorem 5.3]{_Hauzer_Langer_})
the quasi-projective algebraic variety $\bMu_\C^{-1}(0)/GL(V)$ is a fine moduli space
for the isomorphism classes of rank $r$ framed instantons bundles of charge $c$ on $\p3$.
It follows that $\calf_\ell(r,c)$ is naturally identified with $\bMu_\C^{-1}(0)/GL(V)$
as a complex manifold. In the last paragraph of Section \ref{instantons_r4}, we
identified the latter space with $\Sec_0(\mathbf{B}^{\rm reg}(r,c))\4 U(V)$.
\end{proof}

\hfill

We are finally in position to use Theorem \ref{_trihk_red_equal_on_sec_Theorem_} to obtain the second main result of this paper.

\begin{theorem}\label{_instanton bdls_}
The moduli space $\calf_\ell(r,c)$ of rank $r$, charge $c$ instanton bundles on $\C\mathbb{P}^{3}$ framed at a fixed line $\ell$, is a smooth trisymplectic manifold of complex dimension $4rc$.
\end{theorem}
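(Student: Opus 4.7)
The proof plan is to assemble Corollary \ref{_insta_same_as_trihk_Theorem_} and Theorem \ref{_trihk_red_equal_on_sec_Theorem_} directly. The corollary already identifies $\calf_\ell(r,c)$ with the trihyperk\"ahler reduction $\Sec_0(\mathbf{B}^{\rm reg}(r,c))\4 U(V)$, so all that remains is to show that this reduction is smooth, trisymplectic, and of complex dimension $4rc$.

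First I would verify the hypotheses of Theorem \ref{_trihk_red_equal_on_sec_Theorem_} for the pair $M = \mathbf{B}^{\rm reg}(r,c)$ and $G = U(V)$. The space $\mathbf{B}(r,c)$ carries an obvious flat hyperk\"ahler structure as a quaternionic vector space, and $\mathbf{B}^{\rm reg}$ is a $U(V)$-invariant open subset which inherits this flat structure. The group $U(V)$ acts by hyperk\"ahler isometries via \eqref{action}, and the map $\mu=(\mu_\C,\mu_\R)$ given by \eqref{mmC}-\eqref{mmR} is the hyperk\"ahler moment map. The regularity condition ensures that the $U(V)$-action on $\mathbf{B}^{\rm reg}$ is free, so the hyperk\"ahler quotient $\calm(r,c) = \mathbf{B}^{\rm reg}(r,c)\3 U(V)$ is smooth; this is the classical Nakajima moduli space of framed instantons on $\R^4$, and a standard dimension count gives $\dim_\C \calm(r,c) = (2c^2+2rc) - 2c^2 = 2rc$.

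Applying Theorem \ref{_trihk_red_equal_on_sec_Theorem_} then yields an open embedding $\Sec_0(\mathbf{B}^{\rm reg}(r,c))\4 U(V) \hookrightarrow \Sec_0(\calm(r,c))$ compatible with the trisymplectic structures on both sides. Since $\calm(r,c)$ is a smooth hyperk\"ahler manifold, $\Sec_0(\calm(r,c))$ is a smooth trisymplectic manifold by Claim \ref{_trisy_on_Sec_Claim_}, and its complex dimension equals $2\dim_\C\calm(r,c) = 4rc$, because each regular twistor section has normal bundle $\calo(1)^{\dim_\C\calm(r,c)}$ whose space of global sections has complex dimension $2\dim_\C\calm(r,c)$. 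Smoothness, trisymplectic structure, and dimension are inherited by open subsets, and combining with Corollary \ref{_insta_same_as_trihk_Theorem_} finishes the argument.

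The only subtlety I anticipate is a bookkeeping point: Theorem \ref{_trihk_red_equal_on_sec_Theorem_} is stated for a flat hyperk\"ahler manifold $M$, whereas we wish to apply it with $M=\mathbf{B}^{\rm reg}$, an open subset of the flat space $\mathbf{B}$. One should check that the arguments in Propositions \ref{_tauto_trisymple_Proposition_} and \ref{_tau_map_and_bundles_on_CP^1_Proposition_} remain valid when $M$ is replaced by a $G$-invariant open subset on which the action is free and the hyperk\"ahler quotient is smooth; this is the case since those proofs are purely local in $M$ and only use flatness pointwise (through Step 0 of Proposition \ref{_tauto_trisymple_Proposition_}). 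Once this is noted, no further genuinely new computation is required, as the deep geometric content has already been developed in the preceding sections.
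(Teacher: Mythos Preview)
Your proposal is correct and follows essentially the same route as the paper: identify $\calf_\ell(r,c)$ with the trihyperk\"ahler reduction via Corollary \ref{_insta_same_as_trihk_Theorem_}, then apply Theorem \ref{_trihk_red_equal_on_sec_Theorem_} to openly embed it in $\Sec_0(\calm(r,c))$ and read off smoothness, the trisymplectic structure, and the dimension $4rc$. The only difference is that the paper additionally invokes \cite[Thm 3.8]{_JV:Instantons_} to show that this open embedding is in fact an isomorphism onto $\Sec(\calm(r,c))$; as the paper's own Remark immediately after the proof points out, this extra input is not needed for the theorem as stated, so your streamlined argument is already sufficient.
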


\begin{proof}
The moduli space $\calm(r,c):=\mathbf{B}^{\rm reg}(r,c)\3
U(V)$ of framed instantons of rank $r$ and charge $c$ is
known to be a smooth, connected, hyperk\"ahler manifold of
complex dimension $2rc$; it follows that
$\Sec_0(\calm(r,c))$ is a smooth, trisymplectic manifold
of complex dimension $4rc$ (cf. Section \ref{_ratcurves_Subsection_} above).
By the ADHM construction of framed instanton bundles, a point in $\calf_\ell(r,c)$ can 
be regarded as a pair $(X_1,X_2)\in\mathbf{B}\oplus\mathbf{B}$
satisfying equations \ref{1-diml-eqns}. Therefore we have a map
\begin{equation} \label{insta_to_sec_Equation_}
\calf_\ell(r,c) \arrow \Sec(\calm(r,c))
\end{equation}
given by 
$$ (X_1,X_2) \mapsto \sigma(z,w)=zX_1+wX_2, $$
with $\sigma:\p1\to\calm(r,c)$ being a twistor section. By \cite[Theorem 3.9]{_JV:Instantons_},
this map is an isomorphism (without the condition of regularity,
which implies smoothness). 

It follows from Proposition \ref{_insta_same_as_trihk_Theorem_}
that $\calf_\ell(r,c)$ is the trihyperk\"ahler reduction of $\Sec_0(\mathbf{B}^{\rm reg}(r,c))$.
From its construction, it is clear that the map \eqref{insta_to_sec_Equation_} 
coincides with the map
\begin{equation}\label{_trihype_cano_inst_Equation_}
 \calf_\ell(r,c) =\Sec_0(\mathbf{B}^{\rm reg}(r,c))\4 U(V) \arrow
  \Sec(\mathbf{B}^{\rm reg}(r,c)\3 U(V)=\Sec(\calm(r,c))
\end{equation}
constructed in Theorem \ref{_trihk_red_equal_on_sec_Theorem_} (cf. Section \ref{tau-map});
it then follows that\eqref{_trihype_cano_inst_Equation_}
is in fact an open embedding to $\Sec_0(\calm(r,c))$.
Since \eqref{insta_to_sec_Equation_}  is 
an isomorphism, \[ \Sec_0(\calm(r,c))= \Sec(\calm(r,c)).\]
This former space is smooth, which proves smoothness
of $\calf_\ell(r,c)$.
\end{proof}

\Remark \rm
Notice that Theorem \ref{_trihk_red_equal_on_sec_Theorem_}
in itself only shows that the space
$\calf_\ell(r,c)$, which is a trihyperk\"ahler reduction
of $\Sec_0(\mathbf{B}^{\rm reg}(r,c))$, is 
openly embedded to $\Sec_0(\calm(r,c))$.
This already proves that $\calf_\ell(r,c)$ is smooth, but
to prove that this map is an isomorphism, we use
\cite[Theorem 3.9]{_JV:Instantons_}.
\er

\subsection{Moduli space of rank $\mathbf{2}$ instanton
  bundles on $\C\p3$} \label{instantons_bundles}

Let us now focus on the case of rank $2$ instanton bundles, which is rather special.
Recall that a mathematical instanton bundle on $\C\p3$ is a rank $2$ stable bundle $E\to\C\p3$ with $c_1(E)=0$ and $H^1(E(-2))=0$.

\Proposition
Rank $2$ instanton bundles on $\C\p3$ are precisely mathematical instanton bundles.
\ep
\begin{proof}
if $E$ is a mathematical instanton bundle, then $H^0(E(-1))=0$ by stability. Since $\Lambda^2E=\op3$, there is a (unique up to a scalar) symplectic isomorphism between $E$ and its dual $E^*$; one can then use Serre duality to show that $H^2(E(-2))=H^3(E(-3))=0$,  thus $E$ is a rank $2$ instanton bundle.

Conversely, every instanton bundle can be presented as the
cohomology of a linear monad on $\C\p3$ \cite[Theorem 3]{J-i}.
It is then a classical fact that if $E$ is a
rank $2$ bundle obtained as the cohomology of a linear
monad on $\C\p3$ then $E$ is stable. It is then clear that
every rank $2$ instanton bundle is a  mathematical
instanton bundle.
\end{proof}

Let $\mathcal{I}(c)$ denote the moduli space of mathematical instanton bundles and $\mathcal{I}_\ell(c)$ the open subset of $\mathcal{I}(c)$ consisting of instanton bundles restricting trivially to a fixed line $\ell\subset\C\p3$.

Let also $\mathcal{G}(c)$ denote the moduli space of S-equivalence classes of semistable torsion-free sheaves $E$ of rank $2$ on $\p3$ with $c_1(E)=0$, $c_2(E)=c$ and $c_3(E)=0$; it is a projective variety. $\mathcal{I}(c)$ can be regarded as the open subset of $\mathcal{G}(c)$ consisting of those locally free sheaves satisfying $H^1(E(-1))=0$.

For any fixed line $\ell\subset\C\p3$, $\mathcal{I}(c)$ is contained in $\overline{\mathcal{I}_{\ell}(c)}$, where the closure is taken within
$\mathcal{G}(c)$. Thus $\mathcal{I}(c)$ is irreducible if and only if
there is a line $\ell$ such that $\mathcal{I}_{\ell}(c)$ is irreducible.

Using a theorem due to Grauert and M\"uhlich we can conclude
that every mathematical instanton bundle must restrict trivially at some line $\ell\subset\C\mathbb{P}^{3}$ 
(see \cite[Lemma 3.12]{_JV:Instantons_}). Therefore, $\mathcal{I}(c)$ is covered by open subsets of the form $\mathcal{I}_{\ell}(c)$, but it is not contained within any such sets, since for any nontrivial bundle over $\C\p3$ there must exist a line $\ell'$ such that the restricted sheaf $E|_{\ell'}$ is nontrivial. Thus $\mathcal{I}(c)$ and $\mathcal{I}_{\ell}(c)$ must have the same dimension, and one is nonsingular if and only if the other is as well.

We are now ready to prove the smoothness of the moduli space of mathematical instanton bundles on $\C\p3$.

\begin{theorem}
The moduli space $\mathcal{I}(c)$ of mathematical
instanton bundles on $\C\p3$ of charge $c$  is a smooth
complex manifold of dimension $8c-3$.
\end{theorem}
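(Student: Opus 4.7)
The plan is to reduce the statement to Theorem \ref{_instanton bdls_} applied with $r=2$, combined with the observation, already laid out in this subsection, that $\mathcal{I}(c)$ is covered by the open subsets $\mathcal{I}_\ell(c)$ and that $\mathcal{I}(c)$ and $\mathcal{I}_\ell(c)$ share the same dimension and smoothness. Thus it suffices to show that $\mathcal{I}_\ell(c)$ is a smooth complex manifold of dimension $8c-3$.

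First I would invoke Theorem \ref{_instanton bdls_} with $r=2$ to get that the moduli space $\calf_\ell(2,c)$ of framed rank $2$ instanton bundles is a smooth trisymplectic manifold of complex dimension $8c$. There is a natural forgetful morphism
\[
\pi:\; \calf_\ell(2,c) \arrow \mathcal{I}_\ell(c),\qquad (E,\phi)\mapsto E,
\]
whose fibres parametrize framings modulo bundle automorphisms. Since any two framings differ by an element of $\Aut(\calo_\ell^{\oplus 2}) = GL(2,\C)$ (using $H^0(\calo_\ell)=\C$), and since a rank $2$ instanton bundle is stable and hence simple, with $\Aut(E)=\C^*$ acting by scalars on any framing, the fibre of $\pi$ through $(E,\phi)$ is identified with the homogeneous space $PGL(2,\C) = GL(2,\C)/\C^*$, of complex dimension $3$.

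Next I would check that $\pi$ is a principal $PGL(2,\C)$-bundle in the analytic category. The action of $GL(2,\C)$ on $\calf_\ell(2,c)$ by $g\cdot(E,\phi)=(E, g\circ \phi)$ preserves the isomorphism class of $E$; by simplicity of stable bundles the induced action of $PGL(2,\C)$ on $\calf_\ell(2,c)$ is free, and properness follows from the fact that $PGL(2,\C)$ acts through bundle automorphisms on the trivial bundle on $\ell$, a compact subvariety. Consequently $\mathcal{I}_\ell(c)\cong \calf_\ell(2,c)/PGL(2,\C)$ is a smooth complex manifold of dimension $8c-3$, and pulling back along local sections of $\pi$ produces the required charts. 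Combining this with the open covering $\mathcal{I}(c)=\bigcup_\ell \mathcal{I}_\ell(c)$ established via the Grauert--M\"ulich theorem, $\mathcal{I}(c)$ is itself smooth of dimension $8c-3$.

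The only non-routine step is the verification that the $PGL(2,\C)$-action on $\calf_\ell(2,c)$ is genuinely free and proper in the analytic sense, which is where simplicity of rank $2$ instanton bundles (hence $\Aut(E)=\C^*$) is essential: without it, the quotient could acquire stabilizers or fail Hausdorffness. Once this is in hand, the dimension count and smoothness of $\mathcal{I}_\ell(c)$ are immediate from smoothness of $\calf_\ell(2,c)$ supplied by Theorem \ref{_instanton bdls_}.
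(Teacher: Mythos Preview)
Your argument is essentially the same as the paper's: both use Theorem \ref{_instanton bdls_} at $r=2$, pass through the forgetful map $\calf_\ell(2,c)\to\mathcal{I}_\ell(c)$ with $3$-dimensional fibres, and then invoke the covering $\mathcal{I}(c)=\bigcup_\ell\mathcal{I}_\ell(c)$ discussed just before the theorem. The only visible difference is that the paper identifies the fibre as $SL(W)$ while you identify it as $PGL(2,\C)=GL(2,\C)/\C^*$; your identification is the more careful one (you account explicitly for the scalar automorphisms of a stable bundle), but both groups are $3$-dimensional so the dimension count and the conclusion are identical.
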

\begin{proof}
The forgetful map $\calf_l(2,c)\to\mathcal{I}_\ell(c)$ that takes the pair $(E,\phi)$ simply to $E$ has as fibers the the set of all possible framings at $\ell$ (up to equivalence). Since 
$E|_\ell\simeq W\otimes\mathcal{O}_\ell$ \cite[Proposition 13]{FJ2}, a choice of framing correspond to a choice of basis for the $2$-dimensional space $W$, thus all fibers of the forgetfull map are isomorphic to $SL(W)$. Since $\calf_l(2,c)$ is smooth of dimension $8c$, we conclude that $\mathcal{I}_\ell(c)$ is also smooth and its dimension is $8c-3$. The Theorem follows from our previous discussion.
\end{proof}

The irreducibility of $\mathcal{I}(c)$ for arbitrary $c$ remains an open problem; it is only known to hold for $c$ odd \cite{T} or $c=2,4$ (see \cite{CTT} and the references therein). Clearly, if $\calf_l(2,c)$ is connected, then it must be irreducible, from which one concludes that $\mathcal{I}_\ell(c)$, and hence $\mathcal{I}(c)$, are irreducible. Since  $\calf_l(2,c)$ is a quotient of the set of globally regular solutions of the 1-dimensional ADHM equations, it is actually enough to prove that the latter is connected. 

It is also worth mentioning a recent preprint of Markushevich and Tikhomirov \cite{MT}, in which the authours prove that $\mathcal{I}(c)$ is rational whenever it is irreducible. Thus, one also concludes immediately that $\calf_l(2,c)$ is also rational whenever it is irreducible.

{\small 

}
\end{document}